\newtheorem{thm}{Theorem}[section]
\newtheorem{prop}[thm]{Proposition}
\newtheorem{lem}[thm]{Lemma}
\theoremstyle{definition}
\newtheorem{definition}[thm]{Definition}
\newtheorem{example}[thm]{Example}
\newtheorem{question}[thm]{Question}
\newtheorem{notation}[thm]{Notation}
\theoremstyle{remark}
\newtheorem{remark}[thm]{Remark}
\numberwithin{equation}{section}
\newcommand{\Frob}{\mathrm{Frob}}
\newcommand{\Hom}{\mathrm{Hom}}
\newcommand{\Gal}{\mathrm{Gal}}
\newcommand{\Cl}{\mathrm{Cl}}
\newcommand{\sd}{\mathrm{\tiny sd}}
\newcommand{\cyc}{\mathrm{\tiny cyc}}
\newcommand{\ep}{\epsilon}
\newcommand{\bZ}{\mathbb{Z}}
\newcommand{\bQ}{\mathbb{Q}}
\newcommand{\A}{\mathcal{A}}
\newcommand{\Image}{\mathrm{Im}}
\newcommand{\Tr}{\mathrm{Tr}}
\begin{document}

\large 

\title[Self-duality of rings of integers in tame extensions]{On the self-duality of rings of integers\\ in tame and abelian extensions}

\author{Cindy (Sin Yi) Tsang}
\address{School of Mathematics, Sun Yat-Sen University, Zhuhai}
\email{zengshy26@mail.sysu.edu.cn}
\urladdr{http://sites.google.com/site/cindysinyitsang/} 

\date{\today}

\begin{abstract}Let $L/K$ be a tame and Galois extension of number fields with group $G$. It is well-known that any ambiguous ideal in $L$ is locally free over $\mathcal{O}_KG$ (of rank one), and so it defines a class in the locally free class group of $\mathcal{O}_KG$, where $\mathcal{O}_K$ denotes the ring of integers of $K$. In this paper, we shall study the relationship among the classes arising from the ring of integers $\mathcal{O}_L$ of $L$, the inverse different $\mathfrak{D}_{L/K}^{-1}$ of $L/K$, and the square root of the inverse different $A_{L/K}$ of $L/K$ (if it exists), in the case that $G$ is abelian. They are naturally related because $A_{L/K}^2 = \mathfrak{D}_{L/K}^{-1} = \mathcal{O}_L^*$, and $A_{L/K}$ is special because $A_{L/K} =  A_{L/K}^*$, where $*$ denotes dual with respect to the trace of $L/K$. 
\end{abstract}

\maketitle

\tableofcontents

\newpage

\section{Introduction}\label{intro}

Let $L/K$ be a Galois extension of number fields with group $G$. There are two ambiguous ideals in $L$, namely ideals in $L$ which are invariant under the action of $G$, whose Galois module structure has been studied extensively in the literature. The first is the ring of integers $\mathcal{O}_L$ of $L$, the study of which is a classical problem; see \cite{Frohlich}. The second is the square root $A_{L/K}$ (if it exists) of the inverse different ideal $\mathfrak{D}_{L/K}^{-1}$ of $L/K$, the study of which was initiated by B. Erez in \cite{Erez}. By Hilbert's formula \cite[Chapter IV, Proposition 4]{Serre}, this ideal $A_{L/K}$ exists when $|G|$ is odd, for example. Also, we note that $A_{L/K}$ is special because it is the unique ideal in $L$ (if it exists) which is self-dual with respect to the trace $\Tr_{L/K}$ of $L/K$.

\vspace{1mm}

It is natural to ask whether the Galois module structures of $\mathcal{O}_L$ and $A_{L/K}$ coincide. More specifically, suppose that $L/K$ is tame. Then, any ambiguous ideal $\mathfrak{A}$ in $L$ is locally free over $\mathcal{O}_KG$ of rank one  by \cite[Theorem 1]{Ullom}. Hence, it determines a class $[\mathfrak{A}]_{\bZ G}$ in $\Cl(\bZ G)$ as well as a class $[\mathfrak{A}]$ in $\Cl(\mathcal{O}_KG)$, where $\Cl(-)$ denotes locally free class group. Provided that $A_{L/K}$ exists, we ask:

\begin{question}\label{Q1} Does $[\mathcal{O}_L]_{\bZ G} = [A_{L/K}]_{\bZ G}$ hold in $\Cl(\bZ G)$?
\end{question}

\begin{question}\label{Q2} Does $[\mathcal{O}_L] = [A_{L/K}]$ hold in $\Cl(\mathcal{O}_KG)$?
\end{question}

Since $A_{L/K}$ is self-dual with respect to $\Tr_{L/K}$ and $\mathfrak{D}_{L/K}^{-1}$ is the dual of $\mathcal{O}_L$ with respect to $\Tr_{L/K}$ by definition, we have that
\begin{align*}[\mathcal{O}_L]_{\bZ G}= [A_{L/K}]_{\bZ G}  &\mbox{ implies }[\mathcal{O}_L]_{\bZ G} = [\mathfrak{D}_{L/K}^{-1}]_{\bZ G},\\
 [\mathcal{O}_L] = [A_{L/K}]& \mbox{ implies }[\mathcal{O}_L] = [\mathfrak{D}_{L/K}^{-1}].\end{align*}
In other words, for Questions~\ref{Q1} and~\ref{Q2} to admit an affirmative answer, the ideal $\mathcal{O}_L$ is necessarily \emph{stably self-dual} as a $\bZ G$-module and an $\mathcal{O}_KG$-module, respectively. It is then natural to also ask:

\begin{question}\label{Q3} Does $[\mathcal{O}_L]_{\bZ G} = [\mathfrak{D}_{L/K}^{-1}]_{\bZ G}$ hold in $\Cl(\bZ G)$?
\end{question}

\begin{question}\label{Q4}Does $[\mathcal{O}_L] = [\mathfrak{D}_{L/K}^{-1}]$ hold in $\Cl(\mathcal{O}_KG)$?
\end{question}

On the one hand, a theorem of M. J. Taylor \cite{Taylor1} implies that Question~\ref{Q3} admits an affirmative answer; this fact was re-established by S. U. Chase \cite{Chase}. Using tools from \cite{Chase}, L. Caputo and S. Vinatier showed in \cite{CV} that Question~\ref{Q1} also admits an affirmative answer as long as $L/K$ is locally abelian.

\vspace{1mm}

On the other hand, both Questions~\ref{Q2} and~\ref{Q4} have never been considered in the literature. The main purpose of this paper is to show that for $K\neq\bQ$, they both admit a negative answer in general; see Theorem~\ref{thm main} below.

\subsection{Basic set-up and notation} Fix a number field $K$ as well as a finite group $G$. Let us define
\begin{align*}
R(\mathcal{O}_KG) & =  \{[\mathcal{O}_L] :\text{tame $L/K$ with }\Gal(L/K)\simeq G\},\\
R_{\sd}(\mathcal{O}_KG) & =  \{[\mathcal{O}_L] :\text{tame $L/K$ with }\Gal(L/K)\simeq G\mbox{ and }[\mathcal{O}_L] = [\mathfrak{D}_{L/K}^{-1}]\},
\end{align*}
where ``sd'' stands for ``self-dual''. For $G$ of odd order, further define
\[ \A^t(\mathcal{O}_KG) = \{[A_{L/K}] :\text{tame $L/K$ with }\Gal(L/K)\simeq G\}.\]
Let us remark that both classes $[\mathcal{O}_L]$ and $[A_{L/K}]$ depend upon the choice of the isomorphism $\Gal(L/K)\simeq G$. For $K\neq\bQ$, we shall prove that even the weakened versions of Questions~\ref{Q2} and~\ref{Q4} below admit a negative answer in general; see Theorem~\ref{thm main} below.

\begin{question}\label{Q5}Does $R_{\sd}(\mathcal{O}_KG) = \A^t(\mathcal{O}_KG)$ hold when $|G|$ is odd?
\end{question}

\begin{question}\label{Q6}Does $R(\mathcal{O}_KG) = R_{\sd}(\mathcal{O}_KG)$ hold?
\end{question}

In what follows, for simplicity, suppose that $G$ is abelian. We shall implicitly suppose also that $G$ has odd order whenever we write $\A^t(\mathcal{O}_KG)$. Then, the three subsets of $\Cl(\mathcal{O}_KG)$ in question are related to the so-called Adams operations on $\Cl(\mathcal{O}_KG)$ as follows; also see \cite{Burns2} and~\cite{BurnsChinburg} for other connections between Adams operations and Galois module structures.

\vspace{1mm}

For each $k\in\bZ$ coprime to $|G|$, the \emph{$k$th Adams operation} is defined by
\[ \Psi_k\in\mbox{Aut}(\Cl(\mathcal{O}_KG));\hspace{1em}\Psi_k([X]) = [X_k],\]
where $X$ denotes an arbitrary locally free $\mathcal{O}_KG$-module of rank one, and $X_k$ denotes the $\mathcal{O}_K$-module $X$ on which $G$ acts via
\[ s* x = \phi_k^{-1}(s)\cdot x \mbox{ for $s\in G$ and $x\in X$},\]
where $\phi_k$ is the automorphism on $G$ given by $\phi_k(s) = s^k$. For example, when $X = \mathcal{O}_L$, where $L/K$ is a tame and Galois extension with $h:\Gal(L/K)\simeq G$, then we have $X_k = \mathcal{O}_{L'}$, where $L'=L$ but with $h':\Gal(L'/K)\simeq G$ defined by $h' = \phi_k\circ h$; similarly when $X = A_{L/K}$. In the case that $k = -1$, we have
\begin{align*} \Psi_{-1}([\mathcal{O}_L]) &= [\Hom_{\mathcal{O}_K}(\mathcal{O}_L,\mathcal{O}_K)]^{-1},\\
\Psi_{-1}([A_{L/K}]) &= [\Hom_{\mathcal{O}_K}(A_{L/K},\mathcal{O}_K)]^{-1},\end{align*}
by \cite[Appendix IX, Proposition 3]{F paper}. Let $*$ denote  dual with respect to $\Tr_{L/K}$. Since $\mathfrak{D}_{L/K}^{-1} = \mathcal{O}_L^*$ and $A_{L/K} = A_{L/K}^*$, we then deduce that
\[ \Psi_{-1}([\mathcal{O}_L]) = [\mathfrak{D}_{L/K}^{-1}]^{-1}\mbox{ and }\Psi_{-1}([A_{L/K}]) = [A_{L/K}]^{-1},\]
where the latter equality was proven in \cite[Theorem 1.2 (a)]{Tsang} as well. In the case that $|G|$ is odd and $k=2$, we further have
\[ [A_{L/K}] = [\mathcal{O}_L]\Psi_2([\mathcal{O}_L]),\]
which was shown in \cite[Theorem 1.2.4]{Tsang thesis} and is also essentially a special case of \cite[Theorem 1.4]{Burns2}.

\vspace{1mm}

Now, it is known by \cite{McCulloh} that $R(\mathcal{O}_KG)$ is a subgroup of $\Cl(\mathcal{O}_KG)$. Writing the operation in $\Cl(\mathcal{O}_KG)$ multiplicatively, we then have well-defined maps
\begin{align*} \Xi_k : R(\mathcal{O}_KG) \longrightarrow R(\mathcal{O}_KG);&\hspace{1em}\Xi_k([X]) = [X]\Psi_k([X]),\\
\Xi_k' : R(\mathcal{O}_KG) \longrightarrow R(\mathcal{O}_KG);&\hspace{1em}\Xi_k'([X]) = [X]^{-1}\Psi_k([X]),
\end{align*}
which are in fact homomorphisms because $\Cl(\mathcal{O}_KG)$ is an abelian group. In addition, the above discussion implies that
\begin{equation}\label{Xi} R_{\sd}(\mathcal{O}_KG) = \ker(\Xi_{-1})\mbox{ and }\A^t(\mathcal{O}_KG) = \Image(\Xi_2'),\end{equation}
which are hence subgroups of $R(\mathcal{O}_KG)$. In particular, we have a chain
\begin{equation}\label{chain} R(\mathcal{O}_KG)\supset R_{\sd}(\mathcal{O}_KG) \supset \A^t(\mathcal{O}_KG) \end{equation}
of subgroups in $\Cl(\mathcal{O}_KG)$. From (\ref{Xi}), we deduce the following criteria which distinguish classes in these three subgroups.

\begin{prop}\label{criteria}Suppose that $G$ is abelian and let $c\in R(\mathcal{O}_KG)$.
\begin{enumerate}[(a)]
\item Assume that $\Psi_{-1}(c) = c$. Then, we have $c\in R_{\sd}(\mathcal{O}_KG)$ if and only if $|c|$ divides two.
\item Assume that $|G|$ is odd and that $\Psi_2(c) = c$. Then, we have $c\in A^t(\mathcal{O}_KG)$ only if $c^{n_G(2)}=1$, where $n_G(2)$ is the multiplicative order of $2$ mod $|G|$.
\end{enumerate}
\end{prop}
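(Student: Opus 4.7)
The plan is to deduce both parts directly from the identifications $R_{\sd}(\mathcal{O}_KG) = \ker(\Xi_{-1})$ and $\A^t(\mathcal{O}_KG) = \Image(\Xi_2')$ recorded in (\ref{Xi}), together with the elementary observation that the Adams operations satisfy the semigroup law $\Psi_{k_1}\Psi_{k_2} = \Psi_{k_1 k_2}$ and depend on $k$ only modulo $|G|$, since they are induced by the automorphisms $\phi_k(s) = s^k$ of $G$.

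For part (a), I would simply unfold the condition $c \in \ker(\Xi_{-1})$ as the relation $c \cdot \Psi_{-1}(c) = 1$; substituting the hypothesis $\Psi_{-1}(c) = c$ rewrites this as $c^2 = 1$, which is precisely the condition that $|c|$ divide two. Both directions of the equivalence are then immediate, so no obstacle arises.

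For part (b), set $n = n_G(2)$. The semigroup law gives $\Psi_2^n = \Psi_{2^n} = \Psi_1 = \mathrm{id}$ on $\Cl(\mathcal{O}_KG)$. If $c = (c')^{-1}\Psi_2(c')$ for some $c' \in R(\mathcal{O}_KG)$, then I would form the product $\prod_{i=0}^{n-1} \Psi_2^i(c)$ and, using that $\Psi_2$ is a group homomorphism of $\Cl(\mathcal{O}_KG)$, observe that it telescopes to $(c')^{-1}\Psi_2^n(c') = 1$. The hypothesis $\Psi_2(c) = c$ collapses the same product to $c^n$, and hence $c^n = 1$.

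Since each part is a formal consequence of the definitions and the periodicity of the Adams operations, I do not foresee any substantial obstacle. The only point worth flagging is that part (b) is asserted as a necessary condition only: a preimage $c'$ under $\Xi_2'$ need not itself be fixed by $\Psi_2$, so the telescoping argument does not supply a converse.
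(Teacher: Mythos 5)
Your proposal is correct and follows essentially the same route as the paper: part (a) is the direct unfolding of $c\,\Psi_{-1}(c)=1$ under the hypothesis $\Psi_{-1}(c)=c$, and part (b) is exactly the paper's telescoping product $\prod_{j=0}^{n_G(2)-1}\Psi_{2^j}(c)$ evaluated in two ways, using $\Psi_{2^{n_G(2)}}=\mathrm{id}$. Your closing remark that the argument only yields a necessary condition matches the statement as given.
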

\begin{proof}Part (a) follows directly from (\ref{Xi}). As for part (b), suppose that $|G|$ is odd and  that $c\in A^t(\mathcal{O}_KG)$. By (\ref{Xi}), we know that $c = d^{-1}\Psi_2(d)$ for some $d\in R(\mathcal{O}_KG)$. This implies that
\[ \prod_{j=0}^{n_G(2)-1}\Psi_{2^j}(c) = \prod_{j=0}^{n_G(2)-1}\Psi_{2^j}(d)^{-1}\Psi_{2^{j+1}}(d) = \Psi_{2^0}(d)^{-1}\Psi_{2^{n_G(2)}}(d) = 1.\]
It follows that $c^{n_G(2)}=1$ whenever $\Psi_2(c) = c$ holds.
\end{proof}

For notation, let us also define
\[ \Psi_{\bZ} = \{\Psi_k : k\in\bZ\mbox{ coprime to }|G|\},\]
which is plainly a group isomorphic to $(\bZ/|G|\bZ)^\times$, and
\[\Cl^0(\mathcal{O}_KG) = \ker(\Cl(\mathcal{O}_KG)\longrightarrow \Cl(\mathcal{O}_K)),\]
where the map is that induced by augmentation. Our idea is to use Proposition~\ref{criteria} as well as classes in $\Cl^0(\mathcal{O}_KG)^{\Psi_\bZ}$, namely, classes in $\Cl^0(\mathcal{O}_KG)$ which are invariant under $\Psi_\bZ$, to answer Questions~\ref{Q5} and~\ref{Q6}.

\vspace{1mm}

Finally, for each $n\in\mathbb{N}$, let $C_n$ denote a cyclic group of order $n$, and let $\zeta_n$ denote a primitive $n$th root of unity. Given any multiplicative group $\Gamma$, write $\Gamma^n$ for the set of $n$th powers of elements in $\Gamma$.

\subsection{Statements of the main theorems} First, consider $G = C_p$, where $p$ is an odd prime. For $K\neq\bQ$, in order to answer Questions~\ref{Q5} and~\ref{Q6} in the negative, by (\ref{chain}), we must exhibit non-trivial classes in $R(\mathcal{O}_KC_p)$. This was done in \cite{GRRS} and a key ingredient is the inclusion
\begin{equation}\label{GRRS}
(\Cl^0(\mathcal{O}_KC_p)^{\Psi_\bZ})^{(p-1)/2}\subset R(\mathcal{O}_KC_p)^{\Psi_{\bZ}}.\end{equation}
This was shown in the proof \cite[Proposition 4]{GRRS} using the characterization of $R(\mathcal{O}_KC_p)$ due to L. R. McCulloh in \cite{McCulloh0}. Using (\ref{GRRS}), we deduce that:

\begin{prop}\label{criteria'}Let $p$ be an odd prime and let $c\in \Cl^0(\mathcal{O}_KC_p)^{\Psi_\bZ}$.
\begin{enumerate}[(a)]
\item If $|c|$ does not divide $p-1$, then $c^{(p-1)/2}\in R(\mathcal{O}_KC_p)\setminus R_{\sd}(\mathcal{O}_KC_p)$.
\item If $|c|=2$ and $p\equiv-1\pmod{8}$, then $c\in R_{\sd}(\mathcal{O}_KC_p)\setminus \A^t(\mathcal{O}_KC_p)$.
\end{enumerate}
\end{prop}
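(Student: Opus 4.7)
The plan is to pass from hypotheses on $c$ inside $\Cl^0(\mathcal{O}_KC_p)^{\Psi_\bZ}$ to membership in $R(\mathcal{O}_KC_p)$ via the inclusion (\ref{GRRS}), and then to apply the Adams-operation criteria of Proposition~\ref{criteria} to separate the three subgroups appearing in the chain (\ref{chain}). The key simplification is that, because $c$ is $\Psi_\bZ$-fixed, every power of $c$ is automatically fixed by $\Psi_{-1}$ and by $\Psi_2$, so both parts of Proposition~\ref{criteria} become immediately available once the relevant power is placed in $R(\mathcal{O}_KC_p)$.

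For part~(a), the inclusion (\ref{GRRS}) gives $c^{(p-1)/2}\in R(\mathcal{O}_KC_p)$, and this element is $\Psi_{-1}$-fixed by the remark above. Proposition~\ref{criteria}(a) then reduces the problem to checking that the order of $c^{(p-1)/2}$ does not divide~$2$. But $|c^{(p-1)/2}|\mid 2$ would force $c^{p-1}=1$, hence $|c|\mid p-1$, contradicting the hypothesis. So $c^{(p-1)/2}$ lies in $R(\mathcal{O}_KC_p)\setminus R_{\sd}(\mathcal{O}_KC_p)$.

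For part~(b), I would first observe that $p\equiv -1\pmod 8$ implies $p\equiv 3\pmod 4$, so $(p-1)/2$ is odd, and since $|c|=2$ this means $c^{(p-1)/2}=c$; thus (\ref{GRRS}) still places $c$ in $R(\mathcal{O}_KC_p)^{\Psi_\bZ}$. Because $|c|=2$, Proposition~\ref{criteria}(a) then gives $c\in R_{\sd}(\mathcal{O}_KC_p)$ directly. The remaining, and main, point is to show $c\notin \A^t(\mathcal{O}_KC_p)$. Since $c$ is $\Psi_2$-fixed, Proposition~\ref{criteria}(b) tells me it is enough to verify that $c^{n_{C_p}(2)}\neq 1$, that is, that the multiplicative order of $2$ modulo $p$ is odd. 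This is where the full strength of $p\equiv -1\pmod 8$ enters: by the supplementary law of quadratic reciprocity, $2$ is a quadratic residue modulo $p$, so by Euler's criterion $n_{C_p}(2)$ divides $(p-1)/2$, which is odd by the observation above.

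The argument is thus almost entirely bookkeeping once (\ref{GRRS}) and Proposition~\ref{criteria} are in place; the only genuinely arithmetic step—really the only obstacle—is the use of the supplementary quadratic reciprocity law to force $n_{C_p}(2)$ to be odd in part~(b). Without the congruence $p\equiv -1\pmod 8$, either $(p-1)/2$ could be even or $2$ could fail to be a quadratic residue, and the final step distinguishing $R_{\sd}(\mathcal{O}_KC_p)$ from $\A^t(\mathcal{O}_KC_p)$ would collapse.
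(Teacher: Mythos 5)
Your proposal is correct and follows essentially the same route as the paper: both pass through the inclusion (\ref{GRRS}) to place $c^{(p-1)/2}$ in $R(\mathcal{O}_KC_p)$, apply Proposition~\ref{criteria}(a) to decide membership in $R_{\sd}(\mathcal{O}_KC_p)$ via the order of the class, and in part~(b) use that $p\equiv-1\pmod 8$ makes $2$ a square mod $p$ while $(p-1)/2$ is odd, forcing $n_{C_p}(2)$ to be odd so that Proposition~\ref{criteria}(b) excludes $c$ from $\A^t(\mathcal{O}_KC_p)$. Your explicit spelling-out of the bookkeeping (that $\Psi_\bZ$-fixedness of $c$ supplies the hypotheses of Proposition~\ref{criteria}, and that $|c^{(p-1)/2}|\mid 2$ would force $|c|\mid p-1$) matches what the paper leaves implicit.
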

\begin{proof}Observe that $c^{(p-1)/2} \in R(\mathcal{O}_KC_p)$ by (\ref{GRRS}). Part (a) is then clear from Proposition~\ref{criteria} (a). As for part (b), suppose that $|c|=2$. If $p\equiv-1\pmod{4}$, then $c = c^{(p-1)/2}\in R_{\sd}(\mathcal{O}_KC_p)$ by Proposition~\ref{criteria} (a). If $p\equiv-1\pmod{8}$ in addition, then $c \notin \A^t(\mathcal{O}_KC_p)$ by Proposition~\ref{criteria} (b), because in this case $2$ is a square mod $p$ but $-1$ is not, whence $n_{C_p}(2)$ is necessarily odd.
\end{proof}


Using Proposition~\ref{criteria'} and some further ideas from (\ref{GRRS}), we shall prove:

\begin{thm}\label{thm main}Suppose that $K\neq\bQ$. Then we have:
\begin{enumerate}[(a)]
\item $R(\mathcal{O}_KC_p)\supsetneq R_{\sd}(\mathcal{O}_KC_p)$ for infinitely many odd primes $p$.
\item $R_{\sd}(\mathcal{O}_KC_p)\supsetneq \A^t(\mathcal{O}_KC_p)$ for infinitely many odd primes $p$.
\end{enumerate}
\end{thm}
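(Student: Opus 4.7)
The plan is to reduce the theorem, via Proposition~\ref{criteria'}, to the construction of a $\Psi_\bZ$-invariant class $c\in\Cl^0(\mathcal{O}_KC_p)$ of controlled order for infinitely many odd primes $p$ (with $p\equiv-1\pmod 8$ in part (b)): for (a) one needs $|c|\nmid p-1$, and for (b) one needs $|c|=2$. The statement then follows formally from Proposition~\ref{criteria'}, so the entire content of the proof lies in producing such invariants.

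The first step is to make $\Cl^0(\mathcal{O}_KC_p)^{\Psi_\bZ}$ explicit. The standard description of $\Cl^0(\mathcal{O}_KC_p)$ arising from the pull-back $\mathcal{O}_KC_p\subset\mathcal{O}_K\times\mathcal{O}_K[\zeta_p]$ and the associated Mayer--Vietoris sequence presents this group as essentially $\Cl(\mathcal{O}_K[\zeta_p])$ modulo a Swan-type subgroup built out of units in $(\mathcal{O}_K[\zeta_p]/(1-\zeta_p))^\times$; under this identification, each Adams operation $\Psi_k$ corresponds to the natural action of $\Gal(K(\zeta_p)/K)\simeq(\bZ/p\bZ)^\times$. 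Consequently, $\Psi_\bZ$-invariant classes in $\Cl^0(\mathcal{O}_KC_p)$ are fed by ambiguous ideal classes of the cyclotomic extension $K(\zeta_p)/K$.

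To supply such ambiguous classes I would exploit the hypothesis $K\neq\bQ$. The field $K$ carries auxiliary data unavailable for $\bQ$ (a prime $\mathfrak{q}\subset\mathcal{O}_K$ of residue degree at least $2$, or a non-cyclotomic unit), and by applying Chebotarev's density theorem in a suitable ray class field of $K$ I would arrange that for infinitely many primes $p$ the prime $\mathfrak{q}$ has a prescribed decomposition in $K(\zeta_p)/K$. Genus theory for this abelian extension then attaches an ambiguous ideal class whose order is controlled by the decomposition data. For (a), the flexibility in choosing $p$ allows one to force a prime factor of $|c|$ that does not divide $p-1$; for (b), specializing to $p\equiv-1\pmod 8$ and sharpening the Chebotarev condition should isolate a $2$-torsion invariant class.

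The main obstacle is the fine control in part (b): the invariant class produced must be non-trivial in $\Cl^0(\mathcal{O}_KC_p)^{\Psi_\bZ}$, i.e.\ not absorbed by the Swan subgroup, yet genuinely $2$-torsion rather than of higher $2$-power order. This will likely require a simultaneous Chebotarev condition enforcing both $p\equiv-1\pmod 8$ and the correct splitting of $\mathfrak{q}$, coupled with an explicit $2$-descent on the ambiguous class group of $K(\zeta_p)/K$ to pin down the $2$-rank. Part (a) should be considerably softer, since one has the full divisibility $|c|\nmid p-1$ to play with rather than a precise equality.
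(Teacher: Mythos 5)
Your reduction to Proposition~\ref{criteria'}, and your identification of $\Cl(\mathcal{O}_{K(\zeta_p)})$ with its $\Gal(K(\zeta_p)/K)$-action as the image of $\Cl^0(\mathcal{O}_KC_p)$ compatibly with the Adams operations, both match the paper (the latter is its Lemma~\ref{lem2}). But the construction of the invariant classes is the entire content of the theorem, and funnelling it through ambiguous ideal classes of $K(\zeta_p)/K$ leaves a genuine gap in both parts. For part (a): Chevalley's formula (Proposition~\ref{Chevalley}) expresses $|\Cl(\mathcal{O}_{K(\zeta_p)})^{\Gamma_p}|$ as $|\Cl(\mathcal{O}_K)|$ times a ratio built from $2^{r}$ and the ramification indices $e_{\mathfrak{p}}$, all of which divide $p-1$; so when $\Cl(\mathcal{O}_K)=1$ (e.g.\ $K=\bQ(\sqrt{-1})$) every odd prime dividing the ambiguous class number already divides $p-1$, and no choice of splitting behaviour for an auxiliary prime $\mathfrak{q}$ of $K$ will certify a class of order not dividing $p-1$ by this route. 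The paper instead uses the Swan subgroup: $T(\mathcal{O}_KC_p)\subset\Cl^0(\mathcal{O}_KC_p)^{\Psi_\bZ}$ surjects onto $V_p(\mathcal{O}_K)^{p-1}$ with $V_p(\mathcal{O}_K)=(\mathcal{O}_K/p\mathcal{O}_K)^\times/\uppi_p(\mathcal{O}_K^\times)$, and one chooses $p$ (by Chebotarev for a fixed extension of $\bQ$, not of $K$) to have residue degree $f\geq 2$ in $K$, so that $(\mathcal{O}_K/\mathfrak{p})^\times\simeq C_{p^f-1}$ contributes factors coprime to $p-1$. The relevant residue degree is that of the \emph{varying} prime $p$ in $K$, not that of a fixed $\mathfrak{q}$ in $K(\zeta_p)/K$.

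For part (b) your ambiguous-class computation is exactly one of the paper's two cases, but it provably fails when $K$ is totally imaginary: then $r=0$ in Chevalley's formula and (e.g.\ for $K=\bQ(\sqrt{-1})$, $p\equiv-1\pmod 8$) the guaranteed divisibility of $|\Cl(\mathcal{O}_{K(\zeta_p)})^{\Gamma_p}|$ is odd, so no element of order two is certified. The paper therefore splits into cases: for $K$ not totally real it again uses the Swan subgroup, forcing $V_p(\mathcal{O}_K)$ to contain an element of order four by imposing, via Chebotarev in the fixed field $K_4(\sqrt{2})$ (where $K_4$ adjoins fourth roots of the units of $K$), that units become fourth powers modulo $p$ while $4\mid p^f-1$; only for totally real $K\neq\bQ$ does it run the Chevalley argument, where the factor $2^{r_1}$ with $r_1\geq 2$ supplies the needed even divisor. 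Your single mechanism together with an unspecified ``$2$-descent'' does not cover all $K\neq\bQ$, and this case division (see also Remark~\ref{remark}, which shows the Swan route itself fails for certain real quadratic $K$, so neither mechanism alone suffices) is where the real work of the proof lies.
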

\begin{proof}We shall prove part (b) in Subsection~\ref{proof sec1}. For part (a), we may deduce it using results in \cite{GRRS} as follows. Let $p$ be an odd prime. Let $T(\mathcal{O}_KC_p)$ denote the Swan subgroup of $\Cl(\mathcal{O}_KC_p)$; see \cite{Ullom Swan} or \cite[Section 53]{CR} for the definition. Then, as shown in the proof of \cite[Proposition 4]{GRRS}, we have
\begin{equation}\label{T in R}
T(\mathcal{O}_KC_p)\subset\Cl^0(\mathcal{O}_KC_p)^{\Psi_\bZ}.
\end{equation}
Using Chebotarev's density theorem, it was further shown in \cite[Theorem 5 and Proposition 9]{GRRS} that $T(\mathcal{O}_KC_p)$ contains a class of order coprime to $p-1$ for infinitely many $p$. The claim now follows from Proposition~\ref{criteria'} (a).
\end{proof}

Since the proof of Theorem~\ref{thm main} uses Chebotarev's density theorem, it does not give explicit primes $p$ satisfying the conclusion. In the special case that $K/\bQ$ is abelian with $K$ imaginary, by slightly modifying the proof, we shall give explicit primes $p$ such that $R_{\sd}(\mathcal{O}_KC_p)\supsetneq\A^t(\mathcal{O}_KC_p)$. See \cite{Herreng} for explicit conditions on $K$, in which $p$ is ramified, such that the $p$-rank of $T(\mathcal{O}_KC_p)$ is at least one, so $R(\mathcal{O}_KC_p)\supsetneq R_{\sd}(\mathcal{O}_KC_p)$ by (\ref{T in R}) and Proposition~\ref{criteria'} (a).

\begin{thm}\label{thm explicit}Suppose that $K/\bQ$ is abelian with $K$ imaginary, and let $m$ be the conductor of $K$. \hspace{-1mm}Then, we have $R_{\sd}(\mathcal{O}_KC_p)\supsetneq\A^t(\mathcal{O}_KC_p)$ for all primes $p$ satisfying $p\equiv-1\pmod{8}$ and $p\equiv-1\pmod{2m}$.
\end{thm}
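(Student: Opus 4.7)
The plan is to follow the scheme of the proof of Theorem~\ref{thm main}(b) but replace its Chebotarev-based existence argument with an explicit construction of a class of order exactly $2$ in $\Cl^0(\mathcal{O}_K C_p)^{\Psi_{\bZ}}$; once such a class $c$ has been produced, Proposition~\ref{criteria'}(b) (which uses only the hypothesis $p\equiv -1 \pmod 8$) immediately gives $c \in R_{\sd}(\mathcal{O}_K C_p) \setminus \A^t(\mathcal{O}_K C_p)$. By the inclusion (\ref{T in R}) already invoked in the proof of Theorem~\ref{thm main}(a), it is enough to find such a $c$ inside the Swan subgroup $T(\mathcal{O}_K C_p)$.

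The first step would be to apply the standard presentation of $T(\mathcal{O}_K C_p)$ coming from the Rim pullback square $\mathcal{O}_K C_p \cong \mathcal{O}_K \times_{\mathcal{O}_K/p\mathcal{O}_K} \mathcal{O}_K[\zeta_p]$: the Mayer--Vietoris sequence realizes $T(\mathcal{O}_K C_p)$ as the cokernel of the map
\[
\mathcal{O}_K^\times \times \mathcal{O}_K[\zeta_p]^\times \longrightarrow (\mathcal{O}_K/p\mathcal{O}_K)^\times,
\]
where the first factor is reduction mod $p$ and the second is evaluation $\zeta_p \mapsto 1$ followed by reduction. Thus the problem is reduced to producing an element of order $2$ in $(\mathcal{O}_K/p\mathcal{O}_K)^\times$ that is not killed by the image of the units.

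The second step is to use the congruence $p\equiv -1 \pmod{2m}$ to describe the target. Since $K\subset \bQ(\zeta_m)$ and $p\equiv -1 \pmod m$, the Frobenius at $p$ in $\Gal(\bQ(\zeta_m)/\bQ)$ restricts to complex conjugation $\tau$ on $K$; because $K$ is imaginary, $\tau$ is a nontrivial involution, so $p$ is unramified in $K/\bQ$ and every prime $\mathfrak{p}$ above $p$ has residue field $\bF_{p^2}$. Writing $d = [K:\bQ]$ we obtain
\[
(\mathcal{O}_K/p\mathcal{O}_K)^\times \;\cong\; \prod_{i=1}^{d/2} \bF_{p^2}^\times,
\]
and the assumption $p\equiv -1\pmod 8$ forces $16 \mid p^2-1$, so each factor carries elements of exact order $2$ in abundance.

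The main obstacle, and where the bulk of the care must go, is to show that the image of the units cannot absorb every $2$-torsion element of the target. For this I would use that, for $K$ imaginary (hence CM), $\mathcal{O}_K^\times$ has $\bZ$-rank $[K^+:\bQ]-1 = d/2 - 1$, strictly less than the number of primes of $K$ above $p$; and since $\Frob_{\mathfrak{p}} = \tau$, the reduction of any $u\in \mathcal{O}_K^\times$ satisfies $u^{p+1}\equiv u\,\tau(u) \pmod{\mathfrak{p}}$, pinning its image into the norm-trivial part of the $2$-primary component of $\bF_{p^2}^\times$. The contribution from $\mathcal{O}_K[\zeta_p]^\times$ can be handled analogously: under $\zeta_p\mapsto 1$ the cyclotomic units $(1-\zeta_p^a)/(1-\zeta_p)$ reduce to rational integers, and the remaining units reduce through $\mathcal{O}_K^\times$ modulo $p$-th powers. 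A direct count of $2$-adic valuations in $\prod_i \bF_{p^2}^\times$ versus the rank estimates for the image should then force the existence of a surviving order-$2$ class in $T(\mathcal{O}_K C_p)$, which is the desired $c$.
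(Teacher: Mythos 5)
Your overall route is in fact the paper's: avoid Chebotarev by explicitly producing a class of order two in $T(\mathcal{O}_KC_p)\subset\Cl^0(\mathcal{O}_KC_p)^{\Psi_\bZ}$ from the decomposition $(\mathcal{O}_K/p\mathcal{O}_K)^\times\simeq (C_{p^2-1})^{d/2}$ (forced by $\Frob_p$ being complex conjugation) together with the unit rank $d/2-1$, and then invoke Proposition~\ref{criteria'}\,(b). The genuine gap is in your treatment of the factor $\mathcal{O}_K[\zeta_p]^\times$ in the Mayer--Vietoris presentation. The assertion that the non-cyclotomic units ``reduce through $\mathcal{O}_K^\times$ modulo $p$-th powers'' cannot be right: $(\mathcal{O}_K/p\mathcal{O}_K)^\times$ has order $(p^2-1)^{d/2}$ prime to $p$, so every element is a $p$-th power and the condition is vacuous, and there is no reason for an arbitrary unit of $K(\zeta_p)$ to reduce into $\uppi_p(\mathcal{O}_K^\times)$. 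What is true --- and is the content of \cite[Theorem 5]{GRRS}, packaged in the paper as Lemma~\ref{lem1} --- is that the primes above $p$ are totally ramified in $K(\zeta_p)/K$, so all $\Gal(K(\zeta_p)/K)$-conjugates of a unit $u$ agree modulo such a prime, whence $\bar{u}^{\,p-1}\equiv N_{K(\zeta_p)/K}(u)\in\uppi_p(\mathcal{O}_K^\times)$. This only controls the $(p-1)$-th powers of the images, so what one actually obtains is a surjection $T(\mathcal{O}_KC_p)\longrightarrow V_p(\mathcal{O}_K)^{p-1}$; since $p\equiv-1\pmod 4$ forces the $2$-adic valuation of $p-1$ to be exactly one, you must exhibit an element of order \emph{four}, not two, in $V_p(\mathcal{O}_K)$.

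That last step is precisely where the hypotheses must be spent, and your ``direct count of $2$-adic valuations'' elides it. Quotienting $(C_{p^2-1})^{d/2}$ by the images of the $d/2-1$ fundamental units still surjects onto one copy of $C_{p^2-1}$ (this is Lemma~\ref{group2} in the paper); the remaining danger is the torsion unit $\ep_0$, whose order divides $\delta m$ with $\delta=2$ for $m$ odd and $\delta=1$ otherwise. One then needs $4$ to divide $(p^2-1)/(\delta m)$, which follows from combining $p\equiv-1\pmod 8$ with $p\equiv-1\pmod{2m}$ via $(p^2-1)/(\delta m)=\frac{p+1}{2\delta m}\cdot 2(p-1)$. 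Your sketch uses $p\equiv-1\pmod{2m}$ only to identify the residue fields, so the root-of-unity contribution is never controlled. Finally, your remark that $u^{p+1}\equiv u\tau(u)$ pins the image of $u\in\mathcal{O}_K^\times$ into a ``norm-trivial'' part is not quite right --- $u\tau(u)$ is a totally positive unit of the maximal real subfield, not $1$ --- but that observation is not needed once the rank and torsion counts above are done correctly.
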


\begin{example}Consider the special case when $K = \bQ(\sqrt{D})$, where $D$ is a negative square-free integer not divisible by $p$. For simplicity, let us assume that $D\not\in\{-1,-3\}$. Then, by \cite[Lemma 3.2 and Theorem 3.4]{Kobayashi}, we have
\[ T(\mathcal{O}_KC_p)\simeq \begin{cases}
C_{(p+1)/2}\mbox{ or }C_{p+1}&\mbox{if $\left(\frac{D}{p}\right)=-1$},\\
C_{(p-1)/2}\mbox{ or }C_{p-1}&\mbox{if $\left(\frac{D}{p}\right)=1$},\end{cases}\]
where $\left(\frac{\cdot}{\cdot}\right)$ denotes the Legendre symbol. From Proposition~\ref{criteria'} and (\ref{T in R}), we then deduce that
\[\label{quad eg}\begin{cases}
R(\mathcal{O}_KC_p)\supsetneq R_{\sd}(\mathcal{O}_KC_p) &\mbox{if $\left(\frac{D}{p}\right)=-1$ and $p\neq3$},\\
R_{\sd}(\mathcal{O}_KC_p)\supsetneq \A^t(\mathcal{O}_KC_p)&\mbox{if $\left(\frac{D}{p}\right)=-1$ and $p\equiv-1\hspace{-3mm}\pmod{8}$},
\end{cases}\]
where the second statement may be viewed as a refinement of Theorem~\ref{thm explicit}. To see why, note that by quadratic reciprocity, we have
\begin{equation}\label{QR}\left(\frac{2}{p}\right) = (-1)^{\frac{p^2-1}{8}},\,\ \left(\frac{-1}{p}\right) = (-1)^{\frac{p-1}{2}},\,\ \left(\frac{q}{p}\right)\left(\frac{p}{q}\right)=(-1)^{\frac{p-1}{2}\frac{q-1}{2}},\end{equation}
for any odd prime $q$. Suppose that $p\equiv-1\pmod{8}$ and $p\equiv-1\pmod{2m}$, where $m$ is the conductor of $K$. Since $|D|$ divides $m$, we see that any of its prime divisor is a square mod $p$. It follows that
\[ \left(\frac{D}{p}\right) = \left(\frac{-1}{p}\right)\left(\frac{|D|}{p}\right) = -1,\mbox{ whence }R_{\sd}(\mathcal{O}_KC_p)\supsetneq \A^t(\mathcal{O}_KC_p)\]
by the above, as predicted by Theorem~\ref{thm explicit}. Let us note that not much may be deduced from Proposition~\ref{criteria'} if $\left(\frac{D}{p}\right)=1$, and that the case $D\in\{-1,-3\}$ may be dealt with analogously. 
\end{example}

\vspace{-1.75mm}

Next, we return to an arbitrary abelian group $G$. Recall that the proof of Theorem~\ref{thm main} (a) uses the Swan subgroup $T(\mathcal{O}_KG)$ of $\Cl(\mathcal{O}_KG)$. The connect\-ion between Question~\ref{Q6} and the Swan subgroup was already observed in \cite{Chase} and \cite{Taylor2}; they both used the fact that $T(\bZ C)=1$ for all finite cyclic groups $C$ to answer Question~\ref{Q3} in the positive. We shall investigate this connection further as follows. 

\vspace{1mm}

Observe that the first equality in (\ref{Xi}) implies that 
\begin{equation}\label{Rsd}R(\mathcal{O}_KG)/R_{\sd}(\mathcal{O}_KG)\simeq \Image(\Xi_{-1}).\end{equation}
Thus, it suffices to understand $\Image(\Xi_{-1})$. In Subsection~\ref{Swan section}, for each subgroup $H$ of $G$, we shall define a \emph{generalized Swan subgroup} $T_H^*(\mathcal{O}_KG)$ of $\Cl(\mathcal{O}_KG)^{\Psi_\bZ}$, such that $T_G^*(\mathcal{O}_KG)$ is the usual Swan subgroup $T(\mathcal{O}_KG)$. We shall give lower and upper bounds for $\Image(\Xi_{-1})$ in terms of these $T_H^*(\mathcal{O}_KG)$.  

\begin{thm}\label{thm Swan1}Suppose that $G$ is abelian. Let $H$ be a cyclic subgroup of $G$ and let $n$ denote its order.
\begin{enumerate}[(a)]
\item We have $T_H^*(\mathcal{O}_KG)^{d_n(K)} \subset R(\mathcal{O}_KG)^{\Psi_{\bZ}}$, where
\[ d_n(K) = \begin{cases} [K(\zeta_n):K]/2 & \mbox{when $(\zeta_n\mapsto\zeta_n^{-1})\in\Gal(K(\zeta_n)/K)$},\\ [K(\zeta_n):K] & \mbox{when $(\zeta_n\mapsto\zeta_n^{-1})\notin\Gal(K(\zeta_n)/K)$}.\end{cases}\]
In particular, we have  $T_H^*(\mathcal{O}_KG)^{2d_n(K)} \subset \Image(\Xi_{-1})$.
\item We have $T_H^*(\mathcal{O}_KG)\subset \Image(\Xi_{-1})$ if $n$ is odd and $\zeta_n\in K^\times$.
\end{enumerate}
\end{thm}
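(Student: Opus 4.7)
The plan is to adapt the strategy used by Greither--Replogle--Rubin--Srivastav to establish the inclusion (\ref{GRRS}) in the proof of \cite[Proposition 4]{GRRS}, generalizing from the case $H=G=C_p$ to an arbitrary cyclic subgroup $H$ of an abelian group $G$, and replacing $\Cl^0(\mathcal{O}_KG)^{\Psi_\bZ}$ with the generalized Swan subgroup $T_H^*(\mathcal{O}_KG)$ that will be introduced in Subsection~\ref{Swan section}. The whole argument rests on McCulloh's Hom-description of $\Cl(\mathcal{O}_KG)$ and his characterization of the realizable subgroup $R(\mathcal{O}_KG)$ via Stickelberger-type conditions on idelic Hom-classes. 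Combined with the explicit description of $T_H^*(\mathcal{O}_KG)$, these tools should allow each class in $T_H^*(\mathcal{O}_KG)$ to be realized, up to a controlled power, by the ring of integers of a tamely ramified $G$-extension of $K$ built from explicit Kummer data attached to $H$.

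For part (a), given $c\in T_H^*(\mathcal{O}_KG)$ with $n=|H|$, I would first build a tame Kummer $H$-extension $M/K(\zeta_n)$ of degree $n$ whose idelic resolvend matches the representative of $c$ furnished by the definition of $T_H^*(\mathcal{O}_KG)$. The key step is then to descend to a $G$-extension of $K$ by averaging over $\Gamma := \Gal(K(\zeta_n)/K)$. On the class side this averaging multiplies the class by $[K(\zeta_n):K]$. When complex conjugation $\sigma\colon\zeta_n\mapsto\zeta_n^{-1}$ lies in $\Gamma$, its action on characters of $H$ is inversion, which is precisely the Adams operation $\Psi_{-1}$; since $c$ is $\Psi_\bZ$-invariant, the contribution of $\sigma$ collapses and halves the required exponent, yielding the factor $d_n(K)$. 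This proves $T_H^*(\mathcal{O}_KG)^{d_n(K)}\subset R(\mathcal{O}_KG)^{\Psi_\bZ}$. The second inclusion of (a) is then immediate: for any $x\in T_H^*(\mathcal{O}_KG)^{d_n(K)}$, the $\Psi_\bZ$-invariance gives $\Xi_{-1}(x)=x\Psi_{-1}(x)=x^2$, so $T_H^*(\mathcal{O}_KG)^{2d_n(K)}\subset\Image(\Xi_{-1})$.

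For part (b), with $n$ odd and $\zeta_n\in K^\times$ we have $K(\zeta_n)=K$ and $d_n(K)=1$, so (a) provides only $T_H^*(\mathcal{O}_KG)^2\subset\Image(\Xi_{-1})$. To remove the square, I would apply Kummer theory directly over $K$: for each $c\in T_H^*(\mathcal{O}_KG)$, select a Kummer generator whose $n$-th root cuts out a tame $G$-extension $L/K$ for which the pair $([\mathcal{O}_L],\Psi_{-1}([\mathcal{O}_L]))$ combines to give $[\mathcal{O}_L]\Psi_{-1}([\mathcal{O}_L])=c$ directly. The oddness of $n$ means that $2$ is invertible modulo $n$, which lets us adjust the Kummer parameter by a square root on the nose, producing $c$ itself rather than merely $c^2$.

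The main obstacle I anticipate is in the descent step of part (a): verifying that the action of complex conjugation $\sigma\in\Gamma$ on the Hom-side truly matches the Adams operation $\Psi_{-1}$ on $\Cl(\mathcal{O}_KG)$, and that the Stickelberger-type condition defining $R(\mathcal{O}_KG)$ is preserved under the $\Gamma$-averaging with the halved exponent. The concrete form of the representatives coming from the definition of $T_H^*(\mathcal{O}_KG)$ should make both checks tractable, but the bookkeeping for the interaction among descent, the embedding $H\hookrightarrow G$, and the Stickelberger condition is where the bulk of the technical work lies.
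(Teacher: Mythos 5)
Your proposal correctly identifies the essential ingredients --- McCulloh's characterization of $R(\mathcal{O}_KG)$ and the explicit representatives of $T_H^*(\mathcal{O}_KG)$ --- and your derivation of the second inclusion in (a) from $\Psi_\bZ$-invariance via $\Xi_{-1}(x)=x\Psi_{-1}(x)=x^2$ is exactly the paper's argument. But the central step of your part (a), namely building a Kummer $H$-extension of $K(\zeta_n)$ and ``descending by averaging over $\Gamma$'', is left as a plan rather than a proof, and as described it is not a well-defined operation: a Kummer extension of $K(\zeta_n)$ does not descend to a $G$-extension of $K$ without equivariance conditions on the Kummer generator, and you give no mechanism by which ``averaging'' multiplies a class in $\Cl(\mathcal{O}_KG)$ by $[K(\zeta_n):K]$. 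The paper constructs no field extension at all; it works purely idelically, writing $[(r,\Sigma_H)]=j(c_{H,r})$ for the explicit idele of Proposition~\ref{class in T} and exhibiting $g=(g_v)\in J(\Lambda(KG))$ with $rag(c_{H,r,v}^{d_n(K)})=\Theta^t(g_v)$, so that Lemma~\ref{char lem1} applies directly.

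More seriously, your stated mechanism for the factor $1/2$ is wrong. You attribute the collapse to the $\Psi_\bZ$-invariance of $c$; but $c$ is invariant under all of $\Psi_\bZ$, not just $\Psi_{-1}$, so the same reasoning would collapse the entire sum over $\Gal(K(\zeta_n)/K)$ and yield exponent $1$, which is false in general. The actual source of $d_n(K)$ is the $\Omega_{K_v}$-equivariance constraint on the function $g_v\colon G(-1)\to (K_v^c)^\times$: to assign the constant value $r\in K_v^\times$ at a generator $t$ of $H$, one must assign it on the whole orbit $\{t^i:i\in D\}$ with $D\simeq\Gal(K(\zeta_n)/K)$, and the exponent $\sum_s\langle\chi,s\rangle$ over the support is an integer only after symmetrizing under $s\mapsto s^{-1}$; the symmetrized support has size $|D|$ or $2|D|$ according as $-1\in D$ or not, giving $\Theta^t(g_v)(\chi)=r^{d_n(K)}$ for $\chi(t)\neq1$. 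This is a property of orbits in $G(-1)$ and has nothing to do with any invariance of the class $c$. Likewise in (b), ``adjust the Kummer parameter by a square root'' must be replaced by an actual construction; the paper's is the idele $c_{t,v,r,2}(\chi)=r^{2\langle\chi,t\rangle-\langle\chi,t^2\rangle}$, which lies in $(K_vG)^\times$ because $\zeta_n\in K^\times$, is realizable by Lemmas~\ref{char lem1} and~\ref{char lem3}, and satisfies $\Xi_{-1}(j(c))=j(c_{H,r})$ precisely because $\chi(t)=1\iff\chi(t^2)=1$ when $|t|$ is odd. So the skeleton and the numerology are right, but the proof has a genuine gap at the descent step and misattributes the origin of the key exponent.
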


\begin{thm}\label{thm Swan2}Suppose that $G$ is abelian.
\begin{enumerate}[(a)]
\item We have $\Image(\Xi_{-1}) \subset T_{\cyc}^*(\mathcal{O}_KG)$ if $\Cl(\mathcal{O}_K) = 1$, where
\[T_{\cyc}^*(\mathcal{O}_KG) = \prod_{\substack{H\leq G\\H\mbox{\tiny cyclic}}}T_H^*(\mathcal{O}_KG).\]
\item We have $\Image(\Xi_{-1})\neq1$ if $\Cl(\mathcal{O}_K)^{\delta(G)}\neq1$ and $\zeta_{\exp(G)}\in K^\times$, where
\[ \delta(G) = \begin{cases} 2 & \mbox{when $|G|$ is a power of two},\\ 1 & \mbox{otherwise},\end{cases}\]
and $\exp(G)$ denotes the exponent of $G$, provided that $G\neq1$.
\end{enumerate}
\end{thm}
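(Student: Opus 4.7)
The plan is to treat the two parts in parallel: each rests on the Chase--Taylor principle that $\Xi_{-1}([\mathcal{O}_L]) = [\mathcal{O}_L][\mathfrak{D}_{L/K}^{-1}]^{-1}$ is built from local contributions at the ramified primes, and that the tameness of $L/K$ forces each such contribution to be ``cyclically supported'' in the sense of the generalized Swan subgroups $T_H^*(\mathcal{O}_KG)$ to be defined in Subsection~\ref{Swan section}. Part (a) says this local decomposition lands in $T_{\cyc}^*(\mathcal{O}_KG)$ once $\Cl(\mathcal{O}_K)$ is trivial, while part (b) combines Theorem~\ref{thm Swan1} with a $\Cl(\mathcal{O}_K)$-based construction inside $T_H^*(\mathcal{O}_KG)$ to produce non-trivial classes in $\Image(\Xi_{-1})$.

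For part (a), I would adapt Chase's proof of $[\mathcal{O}_L]_{\bZ G} = [\mathfrak{D}_{L/K}^{-1}]_{\bZ G}$ to the setting of $\mathcal{O}_KG$. At each prime $\mathfrak{p}$ of $K$ ramified in $L$, the inertia subgroup is some cyclic $I_\mathfrak{p} \leq G$, and the local contribution of $\mathfrak{p}$ to $\Xi_{-1}([\mathcal{O}_L])$ can be written explicitly as a class ``built from the inertia of $\mathfrak{p}$ and the ideal class of $\mathfrak{p}$.'' Under the hypothesis $\Cl(\mathcal{O}_K) = 1$, the ideal-class factor vanishes and what remains belongs, essentially by construction, to $T_{I_\mathfrak{p}}^*(\mathcal{O}_KG)$. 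Multiplying over all ramified primes yields $\Xi_{-1}([\mathcal{O}_L]) \in T_{\cyc}^*(\mathcal{O}_KG)$.

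For part (b), I would pick a cyclic subgroup $H \leq G$ depending on the parity of $|G|$. If $|G|$ has an odd prime divisor $p$, take $H$ of order $p$, so that $\zeta_p \in K^\times$ (as $p \mid \exp(G)$) and Theorem~\ref{thm Swan1}(b) gives $T_H^*(\mathcal{O}_KG) \subset \Image(\Xi_{-1})$. If $|G|$ is a power of two, take $H$ cyclic of maximal order $n \geq 2$; then $\zeta_n \in K^\times$ forces $d_n(K) = 1$, so Theorem~\ref{thm Swan1}(a) yields $T_H^*(\mathcal{O}_KG)^2 \subset \Image(\Xi_{-1})$. In each case I would then exhibit a natural injection $\Cl(\mathcal{O}_K) \hookrightarrow T_H^*(\mathcal{O}_KG)$ coming from the definition of the generalized Swan subgroup, under which a non-trivial element of $\Cl(\mathcal{O}_K)^{\delta(G)}$ produces a non-trivial element of $\Image(\Xi_{-1})$.

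The main obstacle is part (a): showing that the Chase-style local decomposition lands \emph{on the nose} in the generalized Swan subgroups -- rather than in some larger ambient group that only coincides with $T_{I_\mathfrak{p}}^*(\mathcal{O}_KG)$ modulo ideal classes -- is precisely what the hypothesis $\Cl(\mathcal{O}_K) = 1$ is there to enable. Part (b) should be essentially formal once $T_H^*(\mathcal{O}_KG)$ is set up, with the $\delta(G)$ factor corresponding exactly to the squaring appearing in Theorem~\ref{thm Swan1}(a) in the $2$-power case.
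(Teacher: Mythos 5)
Your plan for part (a) is essentially the paper's argument: one writes $\Xi_{-1}([\mathcal{O}_L])$ as a product of local contributions, one for each ramified prime $v_0$, each determined by an element $s_{v_0}\in G$ generating the (cyclic) ramification at $v_0$; the hypothesis $\Cl(\mathcal{O}_K)=1$ is used exactly where you say, namely to choose the uniformizer $\pi_{v_0}$ as a \emph{global} element of $\mathcal{O}_K$, so that the local idele $\chi\mapsto\pi_{v_0}^{\langle\chi,s_{v_0}\rangle+\langle\chi,s_{v_0}^{-1}\rangle}$ coincides with the idele representing the Swan class $[(\pi_{v_0},\Sigma_{\langle s_{v_0}\rangle})]$. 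The paper makes this precise via McCulloh's resolvend computation (Lemma~\ref{char lem2}) rather than Chase's framework, but the structure is the same; your sketch identifies the right obstacle without actually discharging it.

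Part (b), however, rests on a claim that is false, and the approach cannot be repaired along the lines you propose. You want ``a natural injection $\Cl(\mathcal{O}_K)\hookrightarrow T_H^*(\mathcal{O}_KG)$ coming from the definition of the generalized Swan subgroup.'' No such map exists: by Proposition~\ref{T = Image}, $T_H^*(\mathcal{O}_KG)=\partial_H(\Lambda_{|H|}^\times)$ is a quotient of $(\mathcal{O}_K/|H|\mathcal{O}_K)^\times$ and lies in the kernel group $D(\mathcal{O}_KG)$; in particular every class $[(r,\Sigma_H)]$ dies under any character-evaluation map $\Cl(\mathcal{O}_KG)\to\Cl(\mathcal{O}_K)$, since $r$ is a global element generating, with $|H|$, the unit ideal. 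So the generalized Swan subgroups are structurally blind to $\Cl(\mathcal{O}_K)$, and the hypothesis $\Cl(\mathcal{O}_K)^{\delta(G)}\neq1$ gives you no lower bound on $|T_H^*(\mathcal{O}_KG)|$, which may perfectly well be trivial. The point of part (b) is precisely the opposite of what you set up: one must produce classes in $\Image(\Xi_{-1})$ that are \emph{not} captured by the $T_H^*$. The paper does this by taking a prime $\mathfrak{p}_0$ representing a non-trivial ideal class, building an idele $c$ supported at $v_0$ out of a \emph{local} uniformizer $\pi_{v_0}$ (using $\zeta_{\exp(G)}\in K^\times$ and Lemmas~\ref{char lem1} and~\ref{char lem3} to certify $j(c)\in R(\mathcal{O}_KG)$), and then checking that evaluation at a suitable character $\chi$ sends $\Xi_{-1}(j(c))$ to $[\mathfrak{p}_0]^{\delta(G)}\in\Cl(\mathcal{O}_K)$, which is non-trivial by hypothesis. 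Your reading of $\delta(G)$ as reflecting the exponent in Theorem~\ref{thm Swan1}(a) is also off: it records whether one can choose $t\in G$ of odd order (allowing the exponent $2\langle\chi,t\rangle-\langle\chi,t^2\rangle$, which evaluates to $\pi_{v_0}^{1}$ after applying $\Xi_{-1}$) versus being forced into the symmetric exponent $\langle\chi,t\rangle+\langle\chi,t^{-1}\rangle$, which yields $\pi_{v_0}^{2}$.
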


From Theorems~\ref{thm Swan1} and~\ref{thm Swan2}, as well as (\ref{Rsd}), we deduce that
\[ R(\mathcal{O}_KG) = R_{\sd}(\mathcal{O}_KG)\mbox{ if and only if }\Cl(\mathcal{O}_K) = 1 \mbox{ and }T_{\cyc}^*(\mathcal{O}_KG) =1,\]
under the assumption that $G$ is an abelian group of odd order such that all $|G|$th roots of unity are contained in $K$.

\begin{example}Suppose that $G=C_p$, where $p$ is an odd prime. Applying Theorem~\ref{thm Swan1} (a) to the full group $G$, we obtain
\[ T(\mathcal{O}_KC_p)^{d_p(K)} \subset R(\mathcal{O}_KC_p)^{\Psi_\bZ},\mbox{ where }d_p(K)\mbox{ divides }(p-1)/2,\]
and so we may regard Theorem~\ref{thm Swan1} (a) as a refinement of (\ref{GRRS}) and (\ref{T in R}). By Theorem~\ref{thm Swan2} (a), when $\Cl(\mathcal{O}_K)=1$, we then have a chain 
\[ T(\mathcal{O}_KC_p)^{p-1}\subset T(\mathcal{O}_KC_p)^{2d_p(K)} \subset \Image(\Xi_{-1}) \subset T(\mathcal{O}_KC_p)\]
of inclusions. Let us consider a few special examples of $K$ with $\Cl(\mathcal{O}_K)=1$.

\vspace{1mm}

By \cite[Lemma 3.2 and Theorem 3.4]{Kobayashi}, we know that
\[
T(\mathcal{O}_KC_p) \simeq \begin{cases} C_{(p+1)/4}
&\mbox{if $K=\bQ(\sqrt{-1})$ and $p\equiv3\hspace{-3mm}\pmod{8}$},\\
C_{(p+1)/6}&\mbox{if $K=\bQ(\sqrt{-3})$ and $p\equiv5\hspace{-3mm}\pmod{12}$}.
\end{cases}\]
By \cite{cyclotomic}, we also know that
\[ T(\mathcal{O}_KC_p)\simeq C_p^{\oplus(p-3)/2}\mbox{ if $K=\bQ(\zeta_p)$ and $p\in\{3,5,7,11,13,17,19\}$}.\]
In all of the above cases, we deduce that $\Image(\Xi_{-1}) = T(\mathcal{O}_KC_p)$, and in particular, from (\ref{Rsd}) we see that the difference between $R(\mathcal{O}_KC_p)$ and $R_{\sd}(\mathcal{O}_KC_p)$ becomes bigger as $p$ increases.
\end{example}

\section{Comparison between $R_{\sd}(\mathcal{O}_KG)$ and $\A^t(\mathcal{O}_KG)$}

In this section, we shall prove Theorems~\ref{thm main} (b) and~\ref{thm explicit}, by using Proposition~\ref{criteria'} (b) to exhibit the existence of a class in $R_{\sd}(\mathcal{O}_KC_p)\setminus\A^t(\mathcal{O}_KC_p)$ for infinitely many odd primes $p$.

\vspace{1mm}

In what follows, let $p$ be any odd prime. Define
\[V_p(\mathcal{O}_K) = \frac{(\mathcal{O}_K/p\mathcal{O}_K)^\times}{\uppi_p(\mathcal{O}_K^\times)},\mbox{ where }\uppi_p:\mathcal{O}_K\longrightarrow\mathcal{O}_K/p\mathcal{O}_K\]
is the natural quotient map. Then, we have a surjective homomorphism
\[T(\mathcal{O}_KC_p)\longrightarrow V_p(\mathcal{O}_K)^{p-1},\]
as shown in \cite[Theorem 5]{GRRS}. This, together with (\ref{T in R}), implies that:

\begin{lem}\label{lem1} If $p\equiv-1\pmod{4}$ and $V_p(\mathcal{O}_K)$ has an element of order four, then $\Cl^0(\mathcal{O}_KC_p)^{\Psi_\bZ}$ has an element of order two.
\end{lem}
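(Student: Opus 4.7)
The plan is to exploit the two facts the author has just recalled: the surjective homomorphism $T(\mathcal{O}_KC_p)\longrightarrow V_p(\mathcal{O}_K)^{p-1}$ of \cite[Theorem 5]{GRRS}, and the inclusion $T(\mathcal{O}_KC_p)\subset \Cl^0(\mathcal{O}_KC_p)^{\Psi_\bZ}$ from (\ref{T in R}). In view of these, it suffices to produce an element of order two inside the subgroup $V_p(\mathcal{O}_K)^{p-1}$ of $V_p(\mathcal{O}_K)$: lifting it through the surjection yields an element of $T(\mathcal{O}_KC_p)$ of even order, and an appropriate power of that lift has order exactly two and lives inside $\Cl^0(\mathcal{O}_KC_p)^{\Psi_\bZ}$ by (\ref{T in R}).

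To exhibit such an element, I would start from an element $v\in V_p(\mathcal{O}_K)$ of order four, which exists by hypothesis, and compute the order of $v^{p-1}$. The congruence $p\equiv -1\pmod 4$ lets us write $p-1 = 2m$ with $m$ odd, so $v^{p-1} = (v^2)^m$. Since $v^2$ has order two and $m$ is odd, $(v^2)^m = v^2$, which is a non-trivial element of order two. Hence $v^{p-1}\in V_p(\mathcal{O}_K)^{p-1}$ is the required order-two element.

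Finally, choose any preimage $t \in T(\mathcal{O}_KC_p)$ of $v^{p-1}$ under the surjection. Its order $\ell$ must be divisible by two, so $t^{\ell/2}$ has order exactly two in $T(\mathcal{O}_KC_p)$, and by (\ref{T in R}) it provides the desired element of order two in $\Cl^0(\mathcal{O}_KC_p)^{\Psi_\bZ}$. There is essentially no obstacle; the one point worth flagging is the role of the hypothesis $p\equiv -1\pmod 4$, without which $4\mid p-1$ and the $(p-1)$-st power of $v$ would be trivial, so raising to the $(p-1)$-st power would no longer convert the order-four witness in $V_p(\mathcal{O}_K)$ into an order-two witness inside the subgroup $V_p(\mathcal{O}_K)^{p-1}$.
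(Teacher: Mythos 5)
Your proposal is correct and follows exactly the route the paper intends: the paper states the lemma as an immediate consequence of the surjection $T(\mathcal{O}_KC_p)\longrightarrow V_p(\mathcal{O}_K)^{p-1}$ and the inclusion (\ref{T in R}), and your computation that $v^{p-1}=v^{2}$ has order two when $p\equiv-1\pmod{4}$ (writing $p-1=2m$ with $m$ odd) is precisely the detail being left implicit. Lifting through the surjection and taking the appropriate power, as you do, completes the argument.
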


In the case that $K$ is not totally real, we shall prove Theorem~\ref{thm main} (b) using Lemma~\ref{lem1}. In the case that $K$ is totally real, however, our method fails in general; see Remark~\ref{remark}. Hence, we must look for elements in $\Cl^0(\mathcal{O}_KC_p)^{\Psi_\bZ}$ of order two lying outside of $T(\mathcal{O}_KC_p)$. 

\vspace{1mm}

To that end, let $\mathcal{M}(KC_p)$ denote the maximal order in $KC_p$, and for convenience, assume that $p$ is large enough so that $[K(\zeta_p):K]=p-1$. Then, we have a natural isomorphism
\[ \mathcal{M}(KC_p) \longrightarrow\mathcal{O}_K\times\mathcal{O}_{K(\zeta_p)}; \hspace{1em}\sum_{s\in C_p}\alpha_ss\mapsto \left(\sum_{s\in C_p}\alpha_s,\sum_{s\in C_p}\alpha_s\chi(s) \right),\]
where $\chi$ is a fixed non-trivial character on $C_p$. This induces an isomorphism
\[ \Cl(\mathcal{M}(KC_p)) \simeq \Cl(\mathcal{O}_K)\times\Cl(\mathcal{O}_{K(\zeta_p)}).\]
In particular, we have a surjective homormorphism
\[ \Cl^0(\mathcal{O}_KC_p) \longrightarrow \Cl(\mathcal{O}_{K(\zeta_p)}),\]
such that the $\Psi_\bZ$-action on $\Cl^0(\mathcal{O}_KC_p)$ corresponds precisely to the $\Gamma_p$-action on $\Cl(\mathcal{O}_{K(\zeta_p)})$, where $\Gamma_p= \Gal(K(\zeta_p)/K)$. This implies that:

\begin{lem}\label{lem2}If $\Cl(\mathcal{O}_{K(\zeta_p)})^{\Gamma_p}$ has an element of order two, then $\Cl^0(\mathcal{O}_KC_p)^{\Psi_\bZ}$ also has an element of order two.
\end{lem}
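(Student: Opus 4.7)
The plan is to descend to $2$-primary parts and exploit the decomposition of the cyclic group $\Psi_\bZ \simeq (\bZ/p\bZ)^\times$ as a direct product of its Sylow $2$-subgroup $P$ and its odd-order complement $P'$. Let $f\colon \Cl^0(\mathcal{O}_KC_p) \twoheadrightarrow \Cl(\mathcal{O}_{K(\zeta_p)})$ denote the surjection from the preceding discussion, write $G = \Psi_\bZ$, and let $c\in \Cl(\mathcal{O}_{K(\zeta_p)})^{\Gamma_p}$ be the given element of order two.

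First I would restrict $f$ to $2$-primary parts. Writing $A$ and $B$ for these, the map restricts to a surjective $G$-equivariant homomorphism $A\twoheadrightarrow B$, since $f$ preserves $2$-primary components and any $b\in B$ is recovered from the $2$-primary part of any preimage by raising to a power coprime to $|b|$. By construction, $c\in B^G$.

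The crucial step is the descent through $P'$. Because $|P'|$ is odd and $|A|$ is a power of two, $|P'|$ is invertible on $A$, so the norm $N_{P'}(a)=\prod_{\psi\in P'}\psi(a)$, suitably rescaled, is an idempotent projecting $A$ onto $A^{P'}$. Averaging a preimage of any $b\in B^{P'}$ furnishes a $P'$-invariant lift of $b^{|P'|}$, and adjusting by a power inverse to $|P'|$ modulo $|b|$ gives a $P'$-invariant lift of $b$ itself; hence $f$ restricts to a surjection $A^{P'}\twoheadrightarrow B^{P'}$. Since $c\in B^G\subseteq B^{P'}$, this forces $A^{P'}\neq 0$.

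Finally, since $G=P\times P'$, the Sylow $2$-subgroup $P$ acts on the nontrivial finite $2$-group $A^{P'}$, and the class equation gives $|(A^{P'})^P|\equiv |A^{P'}|\equiv 0 \pmod{2}$, so $(A^{P'})^P = A^G$ is a nontrivial finite abelian $2$-group and therefore contains an element of order two. Since $A^G\subseteq \Cl^0(\mathcal{O}_KC_p)^{\Psi_\bZ}$, the conclusion follows. The main obstacle is the $P'$-descent step: one must carefully verify that the averaging produces a genuine $P'$-invariant lift in $A$, which hinges on the coprimality of $|P'|$ with $|A|$. The concluding step with $P$ is then the standard fact that a finite $p$-group acting on a nontrivial finite $p$-group has nontrivial fixed points.
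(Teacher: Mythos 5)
Your proof is correct, and it supplies precisely the argument that the paper leaves implicit: the paper derives the lemma directly from the existence of the $\Psi_\bZ$-equivariant surjection $\Cl^0(\mathcal{O}_KC_p)\twoheadrightarrow\Cl(\mathcal{O}_{K(\zeta_p)})$ without spelling out why invariants lift, even though an equivariant surjection does not in general induce a surjection on fixed points. Your route — pass to $2$-primary parts, use the idempotent $\frac{1}{|P'|}\sum_{\psi\in P'}\psi$ to show that the odd part $P'$ of $\Psi_\bZ\simeq(\bZ/p\bZ)^\times$ causes no obstruction (here the coprimality of $|P'|$ with the order of the class of order two is exactly what makes the naive norm argument, which would kill a $2$-torsion class by multiplying it by $p-1$, salvageable), and then invoke the fixed-point congruence for the Sylow $2$-subgroup acting on the nontrivial $2$-group $A^{P'}$ — is a clean and complete justification; note that the last step only yields \emph{some} element of order two in $\Cl^0(\mathcal{O}_KC_p)^{\Psi_\bZ}$ rather than an invariant lift of the given class, but that is all the lemma asserts. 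Two cosmetic remarks: the letter $G$ is already reserved in the paper for the Galois group $C_p$, so reusing it for $\Psi_\bZ$ invites confusion, and you silently mix multiplicative and additive notation ($b^{|P'|}$ versus $A^{P'}\neq 0$); neither affects the mathematics.
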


To show that $\Cl(\mathcal{O}_{K(\zeta_p)})^{\Gamma_p}$ contains an element of order two, we shall need the following so-called Chevalley's ambiguous class formula.

\begin{prop}\label{Chevalley}Let $F/K$ be a cyclic extension. Let $\Gamma = \Gal(F/K)$ denote its Galois group and let $N_{F/K}:F\longrightarrow K$ denote its norm. Then, we have
\[ |\Cl(\mathcal{O}_F)^{\Gamma}| = |\Cl(\mathcal{O}_K)|\cdot \frac{2^r\prod\limits_{\mathfrak{p}}e_\mathfrak{p}}{[\mathcal{O}_K^\times:\mathcal{O}_K^\times\cap N_{F/K}(F^\times)][F:K]},\]
where $r$ is the number of real places in $K$ which complexify in $F/K$. Here $\mathfrak{p}$ ranges over the prime ideals in $K$ and $e_\mathfrak{p}$ is its ramification index in $F/K$.
\end{prop}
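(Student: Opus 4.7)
The plan is to apply $\Gamma$-cohomology to the two fundamental short exact sequences
\[0 \to \mathcal{O}_F^\times \to F^\times \to \mathcal{P}_F \to 0 \quad\text{and}\quad 0 \to \mathcal{P}_F \to I_F \to \Cl(\mathcal{O}_F) \to 0,\]
where $\mathcal{P}_F$ is the group of principal fractional ideals and $I_F$ the group of all fractional ideals of $F$. Hilbert 90 gives $H^1(\Gamma, F^\times) = 0$, so from the first sequence one extracts an isomorphism $\mathcal{P}_F^\Gamma/\mathcal{P}_K \cong H^1(\Gamma, \mathcal{O}_F^\times)$ together with an identification of $H^1(\Gamma, \mathcal{P}_F)$ with $\ker(H^2(\Gamma, \mathcal{O}_F^\times) \to H^2(\Gamma, F^\times))$. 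As $\Gamma$-modules we have $I_F = \bigoplus_{\mathfrak{p}} \mathrm{Ind}_{\Gamma_{\mathfrak{P}}}^{\Gamma}\bZ$, so Shapiro's lemma yields $H^1(\Gamma, I_F) = 0$, forcing the connecting map $\Cl(\mathcal{O}_F)^\Gamma \to H^1(\Gamma, \mathcal{P}_F)$ to be surjective. Thus
\[|\Cl(\mathcal{O}_F)^\Gamma| = [I_F^\Gamma : \mathcal{P}_F^\Gamma]\cdot |H^1(\Gamma, \mathcal{P}_F)|.\]

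I would then compute the two factors on the right separately. Since $\Gamma$-invariant ideals are sums constant on orbits and $\mathfrak{p}\mathcal{O}_F$ is $e_\mathfrak{p}$ times the orbit sum of the primes of $F$ above $\mathfrak{p}$, we obtain $[I_F^\Gamma : I_K] = \prod_{\mathfrak{p}} e_{\mathfrak{p}}$; combining with $[I_K : \mathcal{P}_K] = |\Cl(\mathcal{O}_K)|$ and the first isomorphism above gives
\[[I_F^\Gamma : \mathcal{P}_F^\Gamma] = \frac{|\Cl(\mathcal{O}_K)|\cdot \prod_{\mathfrak{p}} e_{\mathfrak{p}}}{|H^1(\Gamma, \mathcal{O}_F^\times)|}.\]
The periodicity $\widehat{H}^{0} = H^2$ of cyclic cohomology converts the kernel above into
\[H^1(\Gamma, \mathcal{P}_F) \cong \ker\!\bigl(\mathcal{O}_K^\times/N_{F/K}\mathcal{O}_F^\times \to K^\times/N_{F/K}F^\times\bigr) = (\mathcal{O}_K^\times \cap N_{F/K}F^\times)/N_{F/K}\mathcal{O}_F^\times.\]

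The crux is the Herbrand quotient $q(\mathcal{O}_F^\times) = |\widehat{H}^{0}|/|H^1|$. By Dirichlet's unit theorem the torsion-free quotient of $\mathcal{O}_F^\times$ is $\bQ$-isogenous as a $\Gamma$-lattice to $\bZ^{S_F^\infty}/\bZ$, where $S_F^\infty$ denotes the archimedean places of $F$, and $\bZ^{S_F^\infty} = \bigoplus_{v \mid \infty} \mathrm{Ind}_{\Gamma_w}^{\Gamma}\bZ$. Shapiro's lemma gives $q(\mathrm{Ind}_{\Gamma_w}^{\Gamma}\bZ) = |\Gamma_w|$ and $q(\bZ) = [F:K]$, so
\[q(\mathcal{O}_F^\times) = \frac{\prod_{v \mid \infty} |\Gamma_w|}{[F:K]} = \frac{2^r}{[F:K]},\]
since $|\Gamma_w| = [F_w : K_v]$ is $2$ exactly for the real places of $K$ that complexify in $F$. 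This yields $|H^1(\Gamma, \mathcal{O}_F^\times)| = [F:K]\cdot [\mathcal{O}_K^\times : N_{F/K}\mathcal{O}_F^\times]/2^r$, and substituting everything into the formula above---then factoring $[\mathcal{O}_K^\times : N_{F/K}\mathcal{O}_F^\times]$ through the intermediate subgroup $\mathcal{O}_K^\times \cap N_{F/K}F^\times$---produces the claim after cancellation. I expect the Herbrand-quotient computation to be the main obstacle, as it requires the invariance of $q$ under $\bQ$-isogeny of $\Gamma$-lattices; the remaining steps are essentially diagram chasing.
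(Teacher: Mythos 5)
Your argument is correct. Note, however, that the paper does not actually prove this proposition: it simply cites \cite[Chapter II, Remark 6.2.3]{Gras}, this being the classical ambiguous class number formula of Chevalley. What you have written is precisely the standard cohomological proof that underlies that reference: splitting the computation as $|\Cl(\mathcal{O}_F)^\Gamma| = [I_F^\Gamma:\mathcal{P}_F^\Gamma]\cdot|H^1(\Gamma,\mathcal{P}_F)|$ via $H^1(\Gamma,I_F)=0$, evaluating $[I_F^\Gamma:I_K]=\prod_\mathfrak{p} e_\mathfrak{p}$ and $\mathcal{P}_F^\Gamma/\mathcal{P}_K\cong H^1(\Gamma,\mathcal{O}_F^\times)$ (using Hilbert 90), identifying $H^1(\Gamma,\mathcal{P}_F)$ with $(\mathcal{O}_K^\times\cap N_{F/K}F^\times)/N_{F/K}\mathcal{O}_F^\times$ by periodicity, and then eliminating $|H^1(\Gamma,\mathcal{O}_F^\times)|$ via the Herbrand quotient $q(\mathcal{O}_F^\times)=2^r/[F:K]$; the factor $[\mathcal{O}_K^\times\cap N_{F/K}F^\times : N_{F/K}\mathcal{O}_F^\times]$ cancels exactly as you say. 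The one step you rightly flag as delicate, the invariance of the Herbrand quotient under $\bQ$-isogeny of $\Gamma$-lattices (plus its triviality on the torsion part of $\mathcal{O}_F^\times$), is a standard lemma, and your application of Shapiro's lemma to $\bZ^{S_F^\infty}=\bigoplus_{v\mid\infty}\mathrm{Ind}_{\Gamma_w}^{\Gamma}\bZ$ correctly produces the $2^r$ from the complexified real places. So your proposal is a complete and self-contained derivation of a statement the paper treats as a black box.
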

\begin{proof}See \cite[Chapter II, Remark 6.2.3]{Gras}.
\end{proof}

\begin{lem}\label{lem3} If $K\neq\bQ$ is not totally imaginary, with $[K(\zeta_p):K] = p-1$, and $p$ is totally split in $K/\bQ$, then $\Cl(\mathcal{O}_{K(\zeta_p)})^{\Gamma_p}$ has an element of order two.
\end{lem}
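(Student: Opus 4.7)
The plan is to apply Chevalley's ambiguous class formula (Proposition~\ref{Chevalley}) to the cyclic extension $K(\zeta_p)/K$ of degree $p-1$, and then perform a $2$-adic analysis of the resulting expression to show that $|\Cl(\mathcal{O}_{K(\zeta_p)})^{\Gamma_p}|$ is even. By Cauchy's theorem, this immediately yields an element of order two.

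First I would compute each ingredient of Chevalley's formula. The extension $K(\zeta_p)/K$ is unramified outside the primes of $K$ above $p$, so the ramification product reduces to those contributions. Since $p$ splits totally in $K/\bQ$, there are $n := [K:\bQ]$ primes of $K$ above $p$, each with completion $\bQ_p$, and $\bQ_p(\zeta_p)/\bQ_p$ is totally ramified of degree $p-1$. Hence $\prod_{\mathfrak{p}} e_\mathfrak{p} = (p-1)^n$. For the archimedean contribution, $\zeta_p \notin \mathbb{R}$ forces every real place of $K$ to complexify in $K(\zeta_p)$, so $r = r_1(K) \geq 1$ by the hypothesis that $K$ is not totally imaginary. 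For the unit index $U := [\mathcal{O}_K^\times : \mathcal{O}_K^\times \cap N_{K(\zeta_p)/K}(K(\zeta_p)^\times)]$, I would use the fact that any $u \in \mathcal{O}_K^\times$ satisfies $N_{K(\zeta_p)/K}(u) = u^{p-1}$, so $(\mathcal{O}_K^\times)^{p-1}$ lies in the denominator, giving $U \mid [\mathcal{O}_K^\times : (\mathcal{O}_K^\times)^{p-1}]$. Since $K$ has a real embedding, $\mu(K) = \{\pm 1\}$, and Dirichlet's unit theorem gives $[\mathcal{O}_K^\times : (\mathcal{O}_K^\times)^{p-1}] = 2(p-1)^{r_1+r_2-1}$ (as $p-1$ is even).

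Plugging into Proposition~\ref{Chevalley} and simplifying yields
\[
|\Cl(\mathcal{O}_{K(\zeta_p)})^{\Gamma_p}|
 \;=\; V\cdot |\Cl(\mathcal{O}_K)|\cdot 2^{r_1-1}\cdot (p-1)^{r_2},
\]
where $V := 2(p-1)^{r_1+r_2-1}/U$ is a positive integer (using $n-1 = r_1+2r_2-1$). To finish I would split into two cases. If $K$ is not totally real, then $r_2 \geq 1$, so the factor $(p-1)^{r_2}$ is even, making the right-hand side even. If $K$ is totally real, then $r_2 = 0$ and $r_1 = n = [K:\bQ] \geq 2$ since $K \neq \bQ$, so the factor $2^{r_1-1}$ is at least $2$, and again the right-hand side is even. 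In either case $|\Cl(\mathcal{O}_{K(\zeta_p)})^{\Gamma_p}|$ is even, hence contains an element of order two.

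The main subtlety I expect to navigate is the unit index: we only know a divisibility $U \mid 2(p-1)^{r_1+r_2-1}$, not an exact value, so the formula produces $|\Cl^{\Gamma_p}|$ as an integer multiple (rather than an exact value) of $|\Cl(\mathcal{O}_K)|\cdot 2^{r_1-1}\cdot (p-1)^{r_2}$. This is harmless for the evenness conclusion but requires one to verify that $V$ is indeed a positive integer and that the simplified expression is well-defined; the case analysis in the last paragraph is precisely what absorbs the hypothesis that $K$ has at least one real place and $K \neq \bQ$.
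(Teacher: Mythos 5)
Your proposal is correct and follows essentially the same route as the paper: apply Chevalley's ambiguous class formula to $K(\zeta_p)/K$, bound the unit index via $(\mathcal{O}_K^\times)^{p-1}\subset \mathcal{O}_K^\times\cap N_{K(\zeta_p)/K}(K(\zeta_p)^\times)$ and Dirichlet's unit theorem, and then split into the cases $r_2\geq1$ and $r_2=0$ (where $r_1\geq2$ since $K\neq\bQ$). The only cosmetic difference is that you use $\mu(K)=\{\pm1\}$ directly (valid since $K$ has a real place), whereas the paper keeps the torsion index $n_0$ as a divisor of $p-1$ and only pins it down to $2$ in the totally real case; both yield the same evenness conclusion.
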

\begin{proof}Assume the hypothesis. Let us write $[K:\bQ] = r_1 + 2r_2$, where $r_1$ and $2r_2$, respectively, denote the number of real and complex embeddings of $K$. Applying Proposition~\ref{Chevalley} to the field $F = K(\zeta_p)$, we then obtain
\[ |\Cl(\mathcal{O}_{K(\zeta_p)})^{\Gamma_p}| = |\Cl(\mathcal{O}_K)|\cdot \frac{2^{r_1}(p-1)^{r_1+2r_2-1}}{[\mathcal{O}_K^\times:\mathcal{O}_K^\times\cap N_{K(\zeta_p)/K}(K(\zeta_p)^\times)]}.\]
Indeed, we have $r = r_1$ since $K(\zeta_p)$ is totally imaginary. Further, the prime ideals $\mathfrak{p}$ in $K$ which ramify in $K(\zeta_p)/K$ are precisely those above $p$. Since $p$ is totally split in $K/\bQ$, there are $[K:\bQ]$ such $\mathfrak{p}$, and $e_\mathfrak{p} = [K(\zeta_p):K] = p-1$.

\vspace{1mm}

Now, by the Dirichlet's unit theorem, we know that
\[ \mathcal{O}_K^\times = \langle\ep_0\rangle\times\langle\ep_1\rangle\times\cdots\times\langle\ep_{r_1+r_2-1}\rangle,\]
where $\ep_0$ is a root of unity and $\ep_1,\dots,\ep_{r_1+r_2-1}$ are fundamental units. Hence, we have a natural surjective homomorphism
\[ \prod_{j=0}^{r_1+r_2-1} \frac{\langle\ep_j\rangle}{\langle \ep_j^{p-1}\rangle}\longrightarrow \frac{\mathcal{O}_K^\times}{\mathcal{O}_K^\times\cap N_{K(\zeta_p)/K}(K(\zeta_p)^\times)},\]
and so the order of the quotient group on the right divides
\[ n_0\cdot (p-1)^{r_1+r_2-1},\mbox{ where }n_0 = [\langle\ep_0\rangle : \langle\ep_0^{p-1}\rangle].\]
Notice that $n_0$ divides $p-1$ and that $n_0 = 2$ when $K$ is totally real. We then deduce that $|\Cl(\mathcal{O}_{K(\zeta_p)})^{\Gamma_p}|$ is divisible by
\[ \frac{2^{r_1}(p-1)^{r_1+2r_2-1}}{n_0(p-1)^{r_1+r_2-1}} = \frac{2^{r_1}(p-1)^{r_2}}{n_0} =
2^{r_1}(p-1)^{r_2-1}\left(\frac{p-1}{n_0}\right).\]
By hypothesis, we have $r_1\geq 1$, and $r_1\geq2$ when $r_2=0$. Hence, the number above is always even, and so $\Cl(\mathcal{O}_{K(\zeta_p)})^{\Gamma_p}$ has an element of order two.
\end{proof}

\subsection{Proof of Theorem~\ref{thm main} (b)}\label{proof sec1}
 
Fix an algebraic closure $K^c$ of $K$. Let $\widetilde{K}$ denote the Galois closure of $K$ over $\bQ$ lying in $K^c$ and let $K_4$ denote the field obtained by adjoining to $\widetilde{K}$ all fourth roots of elements in $\mathcal{O}_K^\times$. Notice that $K_4/\bQ$ is a Galois extension. 

\vspace{1mm}

The next lemma is motivated by \cite[Proposition 9]{GRRS} and it allows us to use Chebotarev's density theorem to prove Theorem~\ref{thm main} (b).

\begin{lem}\label{lem frob}Let $\tau\in\emph{Gal}(K^c/\bQ)$ and let $f\in\mathbb{N}$ denote the smallest natural number such that $\tau^f|_K = \emph{Id}_K$.
\begin{enumerate}[(a)]
\item Suppose that
\begin{equation}\label{tau1}f\mbox{ is even},\,\ \tau^f|_{K_4} = \emph{Id}_{K_4},\,\ \tau|_{\bQ(\sqrt{-1})} \neq\emph{Id}_{\bQ(\sqrt{-1})},\,\ \tau|_{\bQ(\sqrt{2})} = \emph{Id}_{\bQ(\sqrt{2})}.\end{equation}
Let $\mathfrak{P}$ be any prime ideal in $K_4(\sqrt{2})$, unramified over $\bQ$, such that
\[\emph{Frob}_{K_4(\sqrt{2})/\bQ}(\mathfrak{P}) = \tau|_{K_4(\sqrt{2})},\]
and let $p\bZ$ be the prime lying below $\mathfrak{P}$. Then, we have $p\equiv-1\pmod{8}$, and the group $V_p(\mathcal{O}_K)$ has an element of order four.
\item Suppose that
\begin{equation}\label{tau2}f =1,\,\ \tau|_{\widetilde{K}} = \emph{Id}_{\widetilde{K}},\,\ \tau|_{\bQ(\sqrt{-1})} \neq\emph{Id}_{\bQ(\sqrt{-1})},\,\ \tau|_{\bQ(\sqrt{2})} = \emph{Id}_{\bQ(\sqrt{2})}.\end{equation}
Let $\mathfrak{P}$ be any prime ideal in $\widetilde{K}(\sqrt{-1},\sqrt{2})$, unramified over $\bQ$, such that
\[\emph{Frob}_{\widetilde{K}(\sqrt{-1},\sqrt{2})/\bQ}(\mathfrak{P}) = \tau|_{\widetilde{K}(\sqrt{-1},\sqrt{2})},\]
and let $p\bZ$ be the prime lying below $\mathfrak{P}$. Then, we have $p\equiv-1\pmod{8}$, and the prime $p$ is totally split in $K/\bQ$.
\end{enumerate}
\end{lem}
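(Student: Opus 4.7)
The plan is to first derive $p \equiv -1 \pmod 8$ (common to both parts) and dispatch (b), then concentrate on the $4$-torsion statement in (a). Since $\tau$ is chosen to be the Frobenius of $\mathfrak{P}$ in $K_4(\sqrt{2})/\bQ$ (respectively $\widetilde K(\sqrt{-1},\sqrt{2})/\bQ$ in (b)), restricting it to the quadratic subfields shows that $p$ is inert in $\bQ(\sqrt{-1})$ and split in $\bQ(\sqrt{2})$; by quadratic reciprocity this forces $p \equiv 3 \pmod 4$ and $p \equiv \pm 1 \pmod 8$, hence $p \equiv -1 \pmod 8$. Part (b) is then immediate, because the extra hypothesis $\tau|_{\widetilde K} = \mathrm{Id}$ makes the Frobenius of $p$ in $\widetilde K/\bQ$ trivial, so $p$ splits completely in $\widetilde K$ and \emph{a fortiori} in the subfield $K$.

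For part (a), the strategy is to locate a prime $\mathfrak{p}_0$ of $K$ above $p$ whose completion $K_{\mathfrak{p}_0}$ already contains $K_4$. I first pin down the orders of Frobenius along the tower $K \subseteq \widetilde K \subseteq K_4$: since $\tau|_K$ has order $f$ and $\tau^f|_{K_4} = \mathrm{Id}$, both $\sigma := \tau|_{\widetilde K}$ and $\tau|_{K_4}$ must have order exactly $f$. Setting $\mathfrak{Q} := \mathfrak{P} \cap K_4$, $\widetilde{\mathfrak{p}} := \mathfrak{Q} \cap \widetilde K$, and $\mathfrak{p}_0 := \mathfrak{Q} \cap K$, I will check that the decomposition groups of $\mathfrak{Q}$ in $K_4/\widetilde K$ and of $\widetilde{\mathfrak{p}}$ in $\widetilde K/K$---which equal $\langle \tau|_{K_4} \rangle \cap \Gal(K_4/\widetilde K)$ and $\langle \sigma \rangle \cap \Gal(\widetilde K/K)$, respectively---are both trivial: in each cyclic group of order $f$, the only power $\tau^j$ with $0 \leq j < f$ that restricts to the identity on $\widetilde K$ (resp.\ on $K$) is $\tau^0 = \mathrm{Id}$. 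Consequently the completions collapse to $K_{4,\mathfrak{Q}} = \widetilde K_{\widetilde{\mathfrak{p}}} = K_{\mathfrak{p}_0}$, and the residue field at $\mathfrak{p}_0$ is $\mathbb{F}_{p^f}$ with $f$ even.

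With $K_4 \subseteq K_{\mathfrak{p}_0}$ in hand, every $u \in \mathcal{O}_K^\times$ admits a fourth root inside $\mathcal{O}_{K_{\mathfrak{p}_0}}^\times$, so upon reducing modulo $\mathfrak{p}_0$ the image of $\mathcal{O}_K^\times$ in $\mathbb{F}_{p^f}^\times$ lies in the subgroup $(\mathbb{F}_{p^f}^\times)^4$ of fourth powers, which has index $\gcd(4, p^f-1) = 4$ in the cyclic group $\mathbb{F}_{p^f}^\times$. Using the CRT decomposition $(\mathcal{O}_K/p\mathcal{O}_K)^\times \simeq \prod_{\mathfrak{p}\mid p} \mathbb{F}_{p^{f_\mathfrak{p}}}^\times$, I then project $V_p(\mathcal{O}_K)$ onto the $\mathfrak{p}_0$-factor modulo the image of $\mathcal{O}_K^\times$, and further onto $\mathbb{F}_{p^f}^\times/(\mathbb{F}_{p^f}^\times)^4 \cong \bZ/4\bZ$; the composition is a surjection, which exhibits an element of $V_p(\mathcal{O}_K)$ of order four. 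I expect the hardest step to be the tower identification $K_{4,\mathfrak{Q}} = K_{\mathfrak{p}_0}$: this is precisely where the otherwise curious hypothesis $\tau^f|_{K_4} = \mathrm{Id}$ earns its keep, by collapsing all residue-degree growth of the tower into $K/\bQ$ so that fourth roots of units of $\mathcal{O}_K$ descend to the completion at $\mathfrak{p}_0$.
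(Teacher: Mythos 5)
Your proposal is correct and follows essentially the same route as the paper: both arguments use quadratic reciprocity to get $p\equiv-1\pmod 8$, observe that $\tau^f|_{K_4}=\mathrm{Id}$ forces the prime of $K$ below $\mathfrak{P}$ to split completely in $K_4/K$ (you phrase this via trivial decomposition groups along $K\subset\widetilde{K}\subset K_4$ and equality of completions, the paper via $\Frob_{K_4/K}(\mathfrak{p}_4)=\tau^f|_{K_4}$), so that units of $\mathcal{O}_K$ reduce to fourth powers in the residue field $\mathbb{F}_{p^f}$, and then surject $V_p(\mathcal{O}_K)$ onto $\mathbb{F}_{p^f}^\times/(\mathbb{F}_{p^f}^\times)^4\simeq C_4$ using that $4\mid p^f-1$ since $f$ is even. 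The differences are purely presentational.
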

\begin{proof}In both parts (a) and (b), we clearly have $p\equiv -1$ (mod $8)$ because
\[p\equiv-1\hspace{-3mm}\pmod{8}\mbox{ if and only if }\begin{cases}
p\mbox{ is inert in }\bQ(\sqrt{-1})\\ p\mbox{ is split in }\bQ(\sqrt{2})
\end{cases} \]
by (\ref{QR}). In part (b), the prime $p$ is totally split in $\widetilde{K}/\bQ$ and hence in $K/\bQ$.

\vspace{1mm}

In part (a), let $\mathfrak{p}_4$ and $\mathfrak{p}$ denote the prime ideals in $K_4$ and $K$, respectively, lying below $\mathfrak{P}$. Note that $f$ is the inertia degree of $\mathfrak{p}$ over $\bQ$, and we have
\[\Frob_{K_4/K}(\mathfrak{p}_4) = \tau^f|_{K_4} = \mbox{Id}_{K_4}.\]
This means that $\mathfrak{p}$ is totally split in $K_4/K$, and so elements in $\mathcal{O}_K^\times$ reduce to fourth powers in $\mathcal{O}_K/\mathfrak{p}$. Hence, we have surjective homomorphisms
\[ V_p(\mathcal{O}_K) \longrightarrow (\mathcal{O}_K/\mathfrak{p})^\times/\uppi_\mathfrak{p}(\mathcal{O}_K^\times)\longrightarrow (\mathcal{O}_K/\mathfrak{p})^\times/((\mathcal{O}_K/\mathfrak{p})^\times)^4,\]
where $\uppi_\mathfrak{p}:\mathcal{O}_K\longrightarrow\mathcal{O}_K/\mathfrak{p}$ is the natural quotient map. But $(\mathcal{O}_K/\mathfrak{p})^\times\simeq C_{p^f-1}$, \par\noindent and $4$ divides $p^f-1$ because $f\geq 2$ is even. It follows that the last quotient group above and in particular $V_p(\mathcal{O}_K)$ has an element of order four.
\end{proof}

\begin{proof}[Proof of Theorem~\ref{thm main} (b)] Let $\sigma_c,\sigma_r:K^c\longrightarrow \mathbb{C}$ be embeddings such that 
\[\sigma_c(K)\not\subset\mathbb{R}\mbox{ and }\sigma_r(K) \subset\mathbb{R},\]
if they exist. Further, define
\[\tau_c = \sigma_c^{-1}\circ\rho \circ \sigma_c\mbox{ and }\tau_r = \sigma_r^{-1}\circ\rho \circ \sigma_r,\]
where $\rho:\mathbb{C}\longrightarrow\mathbb{C}$ denotes complex conjugation. Observe that:
\begin{enumerate}[(i)]
\item If $K$ is not totally real, then $\sigma_c$ exists, and $\tau_c$ satisfies (\ref{tau1}).
\item  If $K$ is totally real, then $\sigma_r$ exists, and $\tau_r$ satisfies (\ref{tau2}).
\end{enumerate}
In both cases, let $p\equiv-1$ (mod $8$) be a prime given as in Lemma~\ref{lem frob}. Then, we deduce from Lemmas~\ref{lem1},~\ref{lem2}, and~\ref{lem3} that $\Cl^0(\mathcal{O}_KC_p)^{\Psi_\bZ}$ has an element of order two. The claim now follows from Proposition~\ref{criteria'} (b) and Chebotarev's density theorem.
\end{proof}

\begin{remark}\label{remark}Suppose that $K$ is a real quadratic field such that its fundamental unit $\ep$ has norm $-1$ over $\bQ$. For any odd prime $p$ which is inert in $K/\bQ$, we then have $\ep^{p+1}\equiv -1$ (mod $p\mathcal{O}_K$), as shown in \cite[(1.0.1)]{IK}. Letting $n_p(\ep)$ denote the multiplicative order of $\ep$ mod $p\mathcal{O}_K$, this implies that
\[ |V_p(\mathcal{O}_K)| = \frac{|(\mathcal{O}_K/p\mathcal{O}_K)^\times|}{|\uppi_p(\mathcal{O}_K^\times)|} = \frac{p^2-1}{n_p(\ep)} = \frac{2(p+1)}{n_p(\ep)}\cdot\frac{p-1}{2}.\]
The first quotient is an odd integer by \cite[Theorem 1.3]{IK}, so then $V_p(\mathcal{O}_K)$ has odd order when $p\equiv-1$ (mod $4$). This means that we cannot use Lemma~\ref{lem frob} (a) to find primes $p\equiv-1$ (mod $8$) such that $V_p(\mathcal{O}_K)$ has an element of order four.
\end{remark}

\subsection{Proof of Theorem~\ref{thm explicit}} First, we need the following group-theoretic lemmas. 

\begin{lem}\label{group1}Let $\Gamma$ be a finite abelian $p$-group, where $p$ is a prime. Let $\Delta$ be any cyclic subgroup of $\Gamma$ whose order is maximal among all cyclic subgroups of $\Gamma$. Then, there exists a subgroup $\Delta'$ of $\Gamma$ such that $\Gamma = \Delta\times\Delta'$.
\end{lem}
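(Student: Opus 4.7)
My plan is to argue by induction on $|\Gamma|$. Write $\Delta = \langle a\rangle$ with $|a| = p^n$, which by hypothesis is the maximal order of any element in $\Gamma$. The base case $\Gamma = \Delta$ is handled by taking $\Delta' = 1$, so I assume $\Gamma \supsetneq \Delta$. My strategy for the inductive step is to produce a subgroup $C \subset \Gamma$ of order $p$ with $C\cap\Delta = 1$, pass to $\Gamma/C$, apply the induction hypothesis there, and lift the resulting splitting back to $\Gamma$.

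To construct $C$, I would pick $b\in\Gamma\setminus\Delta$ of minimal order, say $p^k$. Minimality forces $b^p\in\Delta$, for otherwise $b^p\in\Gamma\setminus\Delta$ would have strictly smaller order; also $k\leq n$ since $p^n$ is the maximal element order. Writing $b^p = a^m$, the equality $|a^m| = |b^p| = p^{k-1}$ translates to $\gcd(p^n,m) = p^{n-k+1}$, so in particular $p^{n-k+1}\mid m$ and $m = p^{n-k+1}m'$ for some integer $m'$. Setting $c := b\cdot a^{-p^{n-k}m'}$, one computes $c^p = a^m\cdot a^{-m} = 1$, while $c\notin\Delta$ because $b\notin\Delta$, so $c$ has order exactly $p$. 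Taking $C := \langle c\rangle$ then yields $C\cap\Delta = 1$, since $|C| = p$ and $c\notin\Delta$.

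Next, the image $\overline\Delta$ of $\Delta$ in $\Gamma/C$ is cyclic of order $p^n$ (because $\Delta\cap C=1$), and it is still of maximal cyclic order in $\Gamma/C$: any element of $\Gamma/C$ of order $\geq p^{n+1}$ would lift to an element of $\Gamma$ of order $\geq p^{n+1}$, contradicting maximality. Since $|\Gamma/C| < |\Gamma|$, the induction hypothesis applied to $\Gamma/C$ yields a subgroup $\Delta''\supseteq C$ of $\Gamma$ with $\Gamma/C = \overline\Delta\times(\Delta''/C)$. I would then set $\Delta' := \Delta''$ and verify directly: the equality $\Gamma/C = \overline\Delta\cdot(\Delta''/C)$ together with $C\subseteq\Delta''$ gives $\Gamma = \Delta\cdot\Delta'$, while $\overline\Delta\cap(\Delta''/C) = 1$ gives $\Delta C\cap\Delta'' = C$, whence $\Delta\cap\Delta'\subseteq C\cap\Delta = 1$; so $\Gamma=\Delta\times\Delta'$. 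The main obstacle is the construction of $c$ in the middle paragraph; this is the only place where the maximality assumption on $|\Delta|$ is genuinely used, via the divisibility $p^{n-k+1}\mid m$ that makes the correction $b\mapsto b\cdot a^{-p^{n-k}m'}$ kill the $p$-th power.
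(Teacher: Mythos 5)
Your proof is correct and complete: the construction of the order-$p$ subgroup $C$ with $C\cap\Delta=1$ via the correction $b\mapsto b\,a^{-p^{n-k}m'}$ (which is exactly where maximality of $|\Delta|$ enters, through $p^{n-k+1}\mid m$) and the lift of the splitting from $\Gamma/C$ are all sound. The paper itself gives no argument but simply points to the proof of \cite[Chapter I, Theorem 8.2]{Lang}, and your induction is essentially that standard textbook argument, so there is nothing to reconcile.
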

\begin{proof}See the proof of \cite[Chapter I, Theorem 8.2]{Lang}, for example.
\end{proof}
%
%

\begin{lem}\label{group2}Let $\Gamma$ be a group isomorphic to $k$ copies of $C_n$, where $k,n\in\mathbb{N}$, and let $\Delta$ be any cyclic subgroup of order $n$. Then, there exists a subgroup $\Delta'$ of $\Gamma$ such that $\Gamma = \Delta\times\Delta'$. Moreover, for any $x\in\Gamma$, there exists a surjective homomorphism from $\Gamma/\langle x\rangle$ to $k-1$ copies of $C_n$.
\end{lem}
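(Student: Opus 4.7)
The plan is to handle the two statements separately, but in a way that lets the first feed into the second. For the first statement, reduce to the $p$-primary components of $\Gamma$. Writing $n = \prod_{p \mid n} p^{a_p}$, one has $\Gamma \cong \bigoplus_{p \mid n} \Gamma_p$ with each $\Gamma_p \cong C_{p^{a_p}}^k$, and accordingly $\Delta = \bigoplus_{p \mid n} \Delta_p$ with $\Delta_p$ cyclic of order $p^{a_p}$, which is the maximal cyclic order occurring in $\Gamma_p$. Applying Lemma~\ref{group1} to each $\Gamma_p$ supplies a complement $\Delta_p'$, and taking $\Delta' = \bigoplus_{p \mid n} \Delta_p'$ yields the required decomposition $\Gamma = \Delta \times \Delta'$. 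Comparing invariant factors in the resulting isomorphism $C_n \oplus \Delta' \cong C_n^k$ then forces $\Delta' \cong C_n^{k-1}$, a fact which will be useful below.

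For the second statement, identify $\Gamma$ with $\bZ^k/n\bZ^k$ via a fixed decomposition, and lift $x$ to some $\tilde x \in \bZ^k$. The case $x = 0$ is immediate (the projection onto any $k-1$ coordinates does the job), so assume $x \neq 0$ and factor $\tilde x = e \tilde y$, where $e$ is the gcd of the coordinates of $\tilde x$ and $\tilde y \in \bZ^k$ is primitive. By the classical basis-extension property for free abelian groups, $\tilde y$ may be completed to a $\bZ$-basis of $\bZ^k$; reducing modulo $n$ yields an element $y \in \Gamma$ of order $n$ satisfying $\langle x \rangle = \langle ey\rangle \subseteq \langle y \rangle$. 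Applying the first part of the lemma with $\Delta = \langle y\rangle$ gives $\Gamma = \langle y\rangle \times \Delta''$ with $\Delta'' \cong C_n^{k-1}$, and the natural projection $\Gamma \longrightarrow \Delta''$ annihilates $\langle x\rangle$, hence descends to the sought surjective homomorphism $\Gamma/\langle x\rangle \longrightarrow C_n^{k-1}$.

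I do not foresee a substantial obstacle. The main technical point is that lifting to $\bZ^k$ converts the finite-group question into an elementary statement about primitive vectors in a free abelian group, after which Lemma~\ref{group1} applied componentwise handles the rest. The only mild subtlety is verifying that in the first part the complement $\Delta'$ has the stated isomorphism type $C_n^{k-1}$, which is what makes the second part go through cleanly; this is immediate from uniqueness of invariant factors once the $p$-primary decomposition is in place.
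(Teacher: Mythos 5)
Your proof is correct and follows essentially the same route as the paper: obtain the complement $\Delta'\cong C_n^{k-1}$ from Lemma~\ref{group1} (via the primary decomposition), and for the second claim embed $\langle x\rangle$ in a cyclic subgroup of order $n$ and project onto its complement. The only difference is that you explicitly justify, via the lift to $\bZ^k$ and the primitive-vector argument, the fact that every $x\in\Gamma$ lies in a cyclic subgroup of order $n$, which the paper simply asserts.
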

\begin{proof}The first claim is a direct consequence of Lemma~\ref{group1} and plainly $\Delta'$ is necessarily isomorphic to $k-1$ copies of $C_n$. The second claim follows as well  because any $x\in \Gamma$ is contained in some cyclic subgroup $\Delta$ of order $n$. 
\end{proof}

\begin{proof}[Proof of Theorem~\ref{thm explicit}] By Proposition~\ref{criteria'} (b) and Lemma~\ref{lem1}, it is enough to show that $V_p(\mathcal{O}_K)$ has an element of order four. 

\vspace{1mm}

Set $d = [K:\bQ]$ and note that $K\subset\bQ(\zeta_m)$ by hypothesis. First, since $K$ is imaginary, by the Dirichlet's unit theorem, we know that
\[\mathcal{O}_K^\times = \langle\ep_0\rangle\times\langle\ep_1\rangle\times\cdots\times\langle\ep_{d/2-1}\rangle,\]
where $\ep_0$ is a root of unity and $\ep_1,\dots,\ep_{d/2-1}$ are fundamental units. Now, the hypothesis $p\equiv-1$ (mod $m$) implies that $p$ is unramified in $\bQ(\zeta_m)/\bQ$ and
\[\Frob_{\bQ(\zeta_m)/\bQ}(p) = \mbox{complex conjugation}.\]
Since $K$ is imaginary, the inertia degree of $p$ in $K/\bQ$ is equal to two, and so
\[(\mathcal{O}_K/p\mathcal{O}_K)^\times\simeq\prod_{\mathfrak{p}\mid p}(\mathcal{O}_K/\mathfrak{p})^\times\simeq C_{p^2-1}\times\cdots\times C_{p^2-1}\hspace{1em}\mbox{($d/2$ copies}).\]
From Lemma~\ref{group2}, we then deduce that there is a surjective homomorphism
\[  \frac{(\mathcal{O}_K/p\mathcal{O}_K)^\times}{\uppi_p(\langle\ep_1,\dots,\ep_{d/2-1}\rangle)}\longrightarrow C_{p^2-1}.\]
Let $\delta = 2$ if $m$ is odd, and $\delta =1$ if $m$ is even. Then, the order of $\langle\ep_0\rangle$ divides $\delta m$, and we see that there are surjective homomorphisms
\[V_p(\mathcal{O}_K)\longrightarrow C_{\frac{p^2-1}{|\langle\ep_0\rangle|}} \longrightarrow C_{\frac{p^2 - 1}{\delta m}}.\]
The last cyclic group has order dividing four, because 
\[\frac{p^2-1}{\delta m} = \left(\frac{p+1}{2\delta m}\right)\cdot 2(p-1),\mbox{ and }p\equiv-1\hspace{-3mm}\pmod{2\delta m}\]
by hypothesis. Thus, indeed  $V_p(\mathcal{O}_K)$ has an element of order four.
\end{proof}

\section{Comparison between $R(\mathcal{O}_KG)$ and $R_{\sd}(\mathcal{O}_KG)$}

In this section, we shall prove Theorems~\ref{thm Swan1} and~\ref{thm Swan2}. A key ingredient is \par\noindent the characterization of $R(\mathcal{O}_KG)$ due to L. R. McCulloh \cite{McCulloh}, which works for all abelian groups $G$; see Subsection~\ref{char section} below. We remark that the proof of (\ref{GRRS}) given in \cite{GRRS} uses his older characterization of $R(\mathcal{O}_KG)$ from \cite{McCulloh0}, which works only for elementary abelian groups $G$.

\vspace{1mm}

In the subsequent subsections, except in Subsection~\ref{Swan section}, we shall assume that $G$ is abelian. We shall further use the following notation.

\begin{notation}Let $M_K$ denote the set of finite primes in $K$. The symbol $F$ shall denote either $K$ or the completion $K_v$ of $K$ at some $v\in M_K$, and
\begin{align*}
\mathcal{O}_F & = \mbox{the ring of integers in $F$},\\
F^c & =\mbox{a fixed algebraic closure of $F$},\\
\mathcal{O}_{F^c} & = \mbox{the integral closure of $\mathcal{O}_F$ in $F^c$},\\
\Omega_F & = \mbox{the Galois group of $F^c/F$}.
\end{align*}
For each $v\in M_K$, we shall regard $K^c$ as lying in $K_v^c$ via a fixed embedding $K^c\longrightarrow K_v^c$ extending the natural embedding $K\longrightarrow K_v$.
\end{notation}

\subsection{Locally free class group} The class group $\Cl(\mathcal{O}_KG)$ admits an idelic description as follows; see \cite[Chapter 6]{CR}, for example.

\vspace{1mm}

Let $J(KG)$ denote the restricted direct product of $(K_vG)^\times$ with respect to the subgroups $(\mathcal{O}_{K_v}G)^\times$ for $v\in M_K$. We have a surjective homomorphism
\[j: J(KG) \longrightarrow \Cl(\mathcal{O}_KG);\hspace{1em} j(c) = [\mathcal{O}_KG\cdot c],\]
where we define
\[ \mathcal{O}_KG\cdot c =  \bigcap_{v\in M_K}\left(\mathcal{O}_{K_v}G\cdot c_v\cap KG\right).\]
This in turn induces an isomorphism
\begin{equation}\label{idelic description} \Cl(\mathcal{O}_KG) \simeq \frac{J(KG)}{(KG)^\times U(\mathcal{O}_KG)},\mbox{ where }U(\mathcal{O}_KG) = \prod\limits_{v\in M_K} (\mathcal{O}_{K_v}G)^\times.\end{equation}
Each component $(K_vG)^\times$ as well as $(KG)^\times$ also admit a Hom-description as follows. Write $\widehat{G}$ for the group of irreducible $K^c$-valued characters on $G$. We then have canonical identifications
\begin{align}\notag
(F^cG)^\times & = \mbox{Map}(\widehat{G}, (F^c)^\times) &(=\mbox{Hom}(\mathbb{Z}\widehat{G},(F^c)^\times)),\\\label{iden}
(FG)^\times &= \mbox{Map}_{\Omega_F}(\widehat{G},(F^c)^\times)&(=\mbox{Hom}_{\Omega_F}(\mathbb{Z}\widehat{G},(F^c)^\times)),
\end{align}
induced by the association $\alpha\mapsto(\chi\mapsto\alpha(\chi))$, where we define
\[\alpha(\chi) = \sum_{s\in G}\alpha_s \chi(s)\mbox{ for }\alpha=\sum_{s\in G}\alpha_s s.\]
Finally, we note that via (\ref{idelic description}) and (\ref{iden}), for each $k\in\bZ$ coprime to $|G|$, the $k$th Adams operation $\Psi_k$ on $\Cl(\mathcal{O}_KG)$ is induced by $\chi\mapsto \chi^k$ on $\widehat{G}$.

\subsection{McCulloh's characterization} \label{char section}

The characterization of $R(\mathcal{O}_KG)$ due to L. R. McCulloh \cite{McCulloh} is given in terms of the so-called \emph{Stickelberger transpose}. We shall recall its definition below.

\begin{definition}Let $G(-1)$ denote the group $G$ on which $\Omega_F$ acts by
\[\omega\cdot s= s^{\kappa(\omega^{-1})}\mbox{ for $s\in G$ and $\omega\in\Omega_F$},\]
where $\kappa(\omega^{-1})\in\bZ$, which is unique modulo $\exp(G)$, is such that 
\[\omega^{-1}(\zeta)=\zeta^{\kappa(\omega^{-1})}\mbox{ for all $\zeta\in F^c$ with $\zeta^{\exp(G)} = 1$}.\]
Note that if $\zeta_n\in F$, then $\Omega_F$ fixes all elements in $G(-1)$ of order dividing $n$.
\end{definition}

\begin{definition}\label{pair}Given $\chi\in\widehat{G}$ and $s\in G$, define 
\[\langle\chi,s\rangle \in \left\{\frac{0}{|s|},\frac{1}{|s|},\dots,\frac{|s|-1}{|s|} \right\}\mbox{ to be such that }\chi(s) = (\zeta_{|s|})^{|s|\langle\chi,s\rangle}.\]
Extend this to a pairing $\langle\hspace{1mm},\hspace{1mm}\rangle:\bQ\widehat{G}\times\bQ G\longrightarrow\bQ$ via $\bQ$-linearity, and define
\[ \Theta:\bQ\widehat{G}\longrightarrow\bQ G(-1);\hspace{1em}\Theta(\psi)=\sum_{s\in G}\langle\psi,s\rangle s,\]
called the \emph{Stickelberger map}.
\end{definition}

As shown in \cite[Proposition 4.5]{McCulloh}, the Stickelberger map
preserves the $\Omega_F$-action. Set $A_{\widehat{G}} = \Theta^{-1}(\mathbb{Z}G)$. Then, applying the functor $\mbox{Hom}(-,(F^c)^\times)$ and taking $\Omega_F$-invariants yield a homomorphism
\[\Theta^t:\mbox{Hom}_{\Omega_F}(\mathbb{Z}G(-1),(F^c)^\times)\longrightarrow \mbox{Hom}_{\Omega_F}(A_{\widehat{G}},(F^c)^\times);\hspace{1em}g\mapsto g\circ\Theta.\]
This is the \emph{Stickelberger transpose} map defined in \cite{McCulloh}.


\vspace{1mm}

For brevity, define
\begin{align}\label{lambda def} \Lambda(FG)^\times&=\mbox{Map}_{\Omega_F}(G(-1),(F^c)^\times) & (=\mbox{Hom}_{\Omega_F}(\mathbb{Z}G(-1),(F^c)^\times)).\end{align}
Observe that we have a diagram
\[\begin{tikzcd}[column sep = 1.75cm, row sep = 1.75cm](FG)^\times \arrow{r}{rag} & \mbox{Hom}_{\Omega_F}(A_{\widehat{G}},(F^c)^\times)\\&\Lambda(FG)^\times,\arrow{u}[right]{\Theta^t}
\end{tikzcd}\]
where $rag$ is restriction to $A_{\widehat{G}}$ via the identification (\ref{iden}). 

\vspace{1mm}

Now, let $J(\Lambda(KG))$ denote the restricted direct product of $\Lambda(K_vG)^\times$ with respect to the subgroups $\mbox{Map}_{\Omega_F}(G(-1),\mathcal{O}_{F^c}^\times)$ for $v\in M_K$. We then have the following partial characterization of $R(\mathcal{O}_KG)$; see \cite{McCulloh} for the full characterization.

\begin{lem}\label{char lem1}Given $c = (c_v) \in J(KG)$, if there exists $g = (g_v)\in J(\Lambda(KG))$ such that $rag(c_v) = \Theta^t(g_v)$ for all $v\in M_K$, then $j(c) \in R(\mathcal{O}_KG)$.
\end{lem}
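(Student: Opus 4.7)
The plan is to produce, from the idelic datum $g=(g_v)$, a tame Galois extension $L/K$ with $\Gal(L/K)\simeq G$ whose ring of integers has class $j(c)$ in $\Cl(\mathcal{O}_KG)$. The argument is local-to-global in nature and rests on McCulloh's resolvend formalism.

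I begin place by place. For each $v\in M_K$, I interpret $g_v\in\Lambda(K_vG)^\times=\Hom_{\Omega_{K_v}}(\mathbb{Z}G(-1),(K_v^c)^\times)$ as Kummer-style data producing a tame $G$-Galois \'etale $K_v$-algebra $L_v$ together with a normal basis generator $a_v\in\mathcal{O}_{L_v}$ over $\mathcal{O}_{K_v}G$. At the cofinitely many places where $g_v$ actually lands in $\mbox{Map}_{\Omega_{K_v}}(G(-1),\mathcal{O}_{K_v^c}^\times)$, this construction yields an unramified $L_v$. The associated resolvend $r_G(a_v):=\sum_{s\in G}s(a_v)s^{-1}\in(K_v^cG)^\times$ has the property that its restriction to $A_{\widehat{G}}=\Theta^{-1}(\mathbb{Z}G)$ coincides with $\Theta^t(g_v)$; this is essentially the defining feature of $\Theta^t$ and clarifies the role of $A_{\widehat{G}}$, namely that its characters pair integrally with $\Theta$ and thus force the resolvend values on them to lie in $(\mathcal{O}_{K_v}G)^\times$. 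The hypothesis $rag(c_v)=\Theta^t(g_v)$ then says that $c_v$ and $r_G(a_v)$ agree as maps on $A_{\widehat{G}}$, hence differ by an element of $(\mathcal{O}_{K_v}G)^\times$ under (\ref{iden}).

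Next, I assemble the local $L_v$ into a single global tame $G$-Galois $K$-algebra $L$. Because $G$ is abelian, $H^1(\Omega_K,G)=\Hom(\Omega_K,G)$, and a Grunwald--Wang / weak-approximation argument produces a global character inducing the prescribed local extensions at the finitely many places where $g_v$ is not integral. Tameness of $L$ follows from that of each $L_v$. Finally, I identify $[\mathcal{O}_L]$ with $j(c)$: forming the idele $(r_G(a_v))\in J(KG)$ from a coherent choice of local normal basis generators, the agreement on $A_{\widehat{G}}$ just noted implies that this idele differs from $c$ by an element of $U(\mathcal{O}_KG)$ at each place, and by an element of $(KG)^\times$ globally, so both define the same class in $\Cl(\mathcal{O}_KG)$ via (\ref{idelic description}).

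The main obstacle is the global assembly step: realizing the prescribed local $G$-extensions as completions of a single $L/K$ with a compatible global normal basis generator. In general this entails a Grunwald--Wang type obstruction which, crucially, lies in $\ker(\Theta^t)$ and so can be absorbed into a modification of $g$ that leaves $j(c)$ unchanged; this is precisely why the lemma asserts only the sufficient direction. The first and third steps are, by contrast, essentially formal once the definitions of $\Theta$ and $\Theta^t$ are unwound.
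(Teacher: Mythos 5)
The paper's own ``proof'' of this lemma is a one-line citation: the statement is precisely the sufficiency half of McCulloh's characterization of $R(\mathcal{O}_KG)$, namely \cite[Theorem 6.17]{McCulloh}, and nothing is argued in the paper itself. What you have written is instead an outline of the proof of McCulloh's theorem. The strategy you describe --- local resolvend data attached to $g_v$, global assembly, identification of the resulting class with $j(c)$ --- is indeed McCulloh's, but as a self-contained argument it has genuine gaps, concentrated exactly at the places you dismiss as ``essentially formal'' or absorbable.

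First, the local step is not correct as stated. It is not true that an arbitrary $g_v\in\Lambda(K_vG)^\times$ is the Kummer datum of a tame $G$-extension of $K_v$ admitting a normal integral basis generator $a_v$ with $rag(\mathbf{r}_G(a_v))=\Theta^t(g_v)$ on the nose. McCulloh's local theorem describes the set of reduced resolvends of such generators as $\Hom_{\Omega_{K_v}}(A_{\widehat{G}},\mathcal{O}_{K_v^c}^\times)\cdot\Theta^t(\mathbb{F}_v)$, where $\mathbb{F}_v$ is a proper subset of $\Lambda(K_vG)^\times$ (maps with unit values except at one point, where the value is a uniformizer); passing from this to all of $\Theta^t(\Lambda(K_vG)^\times)$ modulo the unit-valued factor requires an argument you do not supply. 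Note also that $\mathbf{r}_G(a_v)$ lies in $(K_v^cG)^\times$, not in $(K_vG)^\times$, so ``forming the idele $(\mathbf{r}_G(a_v))$'' only makes sense after dividing by the resolvend $\mathbf{r}_G(b)$ of a global normal basis generator, as in the proof of Lemma~\ref{char lem2}. Second, the global assembly is the substantive content of Section~6 of \cite{McCulloh}, and your treatment of it is an assertion rather than a proof: the claim that the Grunwald--Wang obstruction ``lies in $\ker(\Theta^t)$ and so can be absorbed into a modification of $g$'' is unjustified and is not how McCulloh proceeds --- he does not prescribe the local completions exactly, but instead proves an approximation theorem for resolvends, exploiting the freedom to alter $c$ by $(KG)^\times U(\mathcal{O}_KG)$ without changing $j(c)$. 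If the goal is to prove this lemma rather than to reprove McCulloh's theorem, the correct proof is the paper's: cite \cite[Theorem 6.17]{McCulloh} directly.
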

\begin{proof}This follows directly from \cite[Theorem 6.17]{McCulloh}.
\end{proof}

For each $v\in M_K$, fix a uniformizer $\pi_v$ of $K_v$. We shall also need:

\begin{lem}\label{char lem2}Let $L/K$ be a tame and Galois extension with $\Gal(L/K)\simeq G$. Then, for each $v\in M_K$, there exists $s_v\in G$ whose order is the ramification index of $L/K$ at $v$, such that $\Xi_{-1}([\mathcal{O}_L]) = j(c_L)$, where $c_L = (c_{L,v}) \in J(KG)$ is defined by $c_{L,v}(\chi) = \pi_v^{\langle\chi,s_v\rangle + \langle\chi,s_v^{-1}\rangle}$ for $\chi\in\widehat{G}$ via the identification (\ref{iden}).
\end{lem}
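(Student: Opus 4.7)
The plan is to work in the idelic description of $\Cl(\mathcal{O}_KG)$ from~(\ref{idelic description}), reduce the computation of $\Xi_{-1}([\mathcal{O}_L])=[\mathcal{O}_L]\Psi_{-1}([\mathcal{O}_L])$ to a place-by-place calculation, and then assemble the two contributions. Under the Hom-identification~(\ref{iden}), the Adams operation $\Psi_{-1}$ acts on each $(K_vG)^\times$ by precomposition with the character involution $\chi\mapsto\chi^{-1}$, so once I have an idele $a_L=(a_{L,v})\in J(KG)$ whose class is $[\mathcal{O}_L]$ and whose $v$-components are written in Hom-form, the idele $\chi\mapsto a_{L,v}(\chi)\,a_{L,v}(\chi^{-1})$ will represent $\Xi_{-1}([\mathcal{O}_L])$.

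The technical heart is therefore to exhibit a local representative $a_{L,v}$ of the desired shape. Since $G$ is abelian and $L/K$ is tame, the inertia subgroup of $G$ at each $v$ is cyclic of order $e_v$ equal to the local ramification index, and I would choose a generator $s_v$, taking $s_v=1$ when $v$ is unramified, which matches the order condition in the statement. I would then pick a local normal basis generator $\alpha_v$ of $\mathcal{O}_L\otimes_{\mathcal{O}_K}\mathcal{O}_{K_v}$ over $\mathcal{O}_{K_v}G$ built from a suitably normalized uniformizer $\pi_w$ of $L_w$ satisfying $\pi_w^{e_v}/\pi_v\in\mathcal{O}_{L_w}^\times$ and $s_v(\pi_w)=\zeta\pi_w$ for a primitive $e_v$-th root of unity $\zeta$. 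Computing the resolvend $\chi\mapsto\sum_{t\in G}t(\alpha_v)\chi(t^{-1})$ place-by-place, and keeping track only of $\pi_v$-valuations since units do not affect the class, the inertial factor is governed by $\chi(s_v)=\zeta_{e_v}^{e_v\langle\chi,s_v\rangle}$ while the unramified factor contributes only units, yielding $a_{L,v}(\chi)\equiv\pi_v^{\langle\chi,s_v\rangle}$ modulo $\mathcal{O}_{K_v^c}^\times$. At unramified $v$ this gives a unit component, consistent with the formula.

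Granted this, the rest is formal. Using $\chi^{-1}(s)=\chi(s^{-1})$ together with Definition~\ref{pair} gives $\langle\chi^{-1},s_v\rangle=\langle\chi,s_v^{-1}\rangle$, so multiplying $a_{L,v}(\chi)$ with $a_{L,v}(\chi^{-1})$ produces exactly $\pi_v^{\langle\chi,s_v\rangle+\langle\chi,s_v^{-1}\rangle}=c_{L,v}(\chi)$, and $\Xi_{-1}([\mathcal{O}_L])=j(c_L)$ follows. The main obstacle is the local resolvend computation in the middle paragraph: producing the clean formula $a_{L,v}(\chi)\equiv\pi_v^{\langle\chi,s_v\rangle}$ at tamely ramified places with the \emph{right} choice of $s_v$. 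This is precisely the Stickelberger-flavored calculation underlying McCulloh's characterization in~\cite{McCulloh}, and in practice I would extract the needed local statement from that paper rather than redo the computation from scratch.
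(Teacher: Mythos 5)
Your overall skeleton is the same as the paper's (represent $[\mathcal{O}_L]$ locally via resolvents, note that $\Psi_{-1}$ is precomposition with $\chi\mapsto\chi^{-1}$, and multiply the two contributions), and your final formula is correct, but the middle paragraph has a genuine gap. The object whose value at $\chi$ is (a unit times) $\pi_v^{\langle\chi,s_v\rangle}$ is the \emph{resolvend} $\mathbf{r}_G(a_v)=\sum_{s}s(a_v)s^{-1}$ of a local generator $a_v$, and this is \emph{not} a component of an idele in $J(KG)$: for ramified $v$ and $\chi$ with $\chi(s_v)\neq1$ the exponent $\langle\chi,s_v\rangle$ is a proper fraction with denominator $e_v$, so $\pi_v^{\langle\chi,s_v\rangle}$ has fractional valuation and does not lie in $K_v^\times$; more structurally, $\mathbf{r}_G(a_v)$ lies in $(K_v^cG)^\times$ but is not $\Omega_{K_v}$-equivariant. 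Consequently there is no $a_L\in J(KG)$ with $j(a_L)=[\mathcal{O}_L]$ and $a_{L,v}(\chi)\equiv\pi_v^{\langle\chi,s_v\rangle}$ modulo units, and the formal step ``$j(a_L)=[\mathcal{O}_L]$ implies $j(\chi\mapsto a_{L,v}(\chi)a_{L,v}(\chi^{-1}))=\Xi_{-1}([\mathcal{O}_L])$'' cannot be applied to the resolvend. Your ``modulo units'' bookkeeping also hides the requirement that any unit discrepancy be $\Omega_{K_v}$-equivariant before it can be absorbed into $U(\mathcal{O}_KG)$.

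The missing idea is the global normal basis generator. The paper writes $L=KG\cdot b$, so that the honest idele representing $[\mathcal{O}_L]$ has components $c_v=\mathbf{r}_G(a_v)/\mathbf{r}_G(b)$ (a quotient of resolvends, which \emph{is} $\Omega_{K_v}$-equivariant), with $\mathbf{r}_G(a_v)(\chi)=\pi_v^{\langle\chi,s_v\rangle}$ exactly by McCulloh's Proposition 5.4. Then
\[ c_v(\chi)c_v(\chi^{-1}) \;=\; \mathbf{r}_G(a_v)(\chi)\,\mathbf{r}_G(a_v)(\chi^{-1})\cdot\bigl(\mathbf{r}_G(b)(\chi)\,\mathbf{r}_G(b)(\chi^{-1})\bigr)^{-1}, \]
and the essential (non-formal) point is that $\mathbf{r}_G(b)\mathbf{r}_G(b)^{[-1]}$ lies in the global group $(KG)^\times$ and therefore contributes trivially to the class, leaving $\Xi_{-1}([\mathcal{O}_L])=j\bigl((\mathbf{r}_G(a_v)\mathbf{r}_G(a_v)^{[-1]})\bigr)=j(c_L)$. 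Without introducing $b$ and this cancellation, your computation only produces the right answer by coincidence of notation; with it, the rest of your argument (including $\langle\chi^{-1},s_v\rangle=\langle\chi,s_v^{-1}\rangle$) goes through as you describe.
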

\begin{proof}We have $L = KG\cdot b$ for some $b\in L$ by the Normal Basis Theorem, and since $\mathcal{O}_L$ is locally free over $\mathcal{O}_KG$ of rank one, for $v\in M_K$, we have
\[\mathcal{O}_{K_v}\otimes_{\mathcal{O}_K}\mathcal{O}_{L} = \mathcal{O}_{K_v}G\cdot a_v\mbox{ for some }a_v\in \mathcal{O}_{K_v}\otimes_{\mathcal{O}_K}\mathcal{O}_L.\]
Following the notation in \cite[Section 1]{McCulloh}, put
\[ \mathbf{r}_G(b) = \sum_{s\in G}s(b)s^{-1}\mbox{ and }\mathbf{r}_G(a_v) = \sum_{s\in G}s(a_v)s^{-1}.\]
Then, by \cite[Proposition 5.4]{McCulloh}, we may choose $a_v$ to be such that 
\begin{equation}\label{resolvent of a}\mathbf{r}_G(a_v)(\chi)=\pi_v^{\langle\chi,s_v\rangle}\mbox{ for all }\chi\in\widehat{G}.\end{equation}
By \cite[Proposition 3.2]{McCulloh} and the discussion following it, there exists
\[ c=(c_v)\in J(KG)\mbox{ such that }\mathbf{r}_G(a_v) = c_v\cdot \mathbf{r}_G(b)\mbox{ and }j(c) = [\mathcal{O}_L].\]
Write $[-1]$ for the involution on $(K_v^cG)^\times$ induced by the involution $s\mapsto s^{-1}$ on $G$. Since $\mathbf{r}_G(b)\mathbf{r}_G(b)^{[-1]}\in (KG)^\times$, we then deduce that
\[\Xi_{-1}([\mathcal{O}_L])= j((c_vc_v^{[-1]}))= j((\mathbf{r}_G(a_v)\mathbf{r}_G(a_v)^{[-1]})).\]
The claim now follows immediately from (\ref{resolvent of a}).
\end{proof}
 
\subsection{Generalized Swan subgroups}\label{Swan section}

Let $H$ be a subgroup of $G$. Following the definition of the Swan subgroup $T(\mathcal{O}_KG)$ given in \cite{Ullom Swan}, we shall define a \emph{generalized Swan subset/subgroup} associated to $H$ as follows.

\vspace{1mm}

For each $r\in\mathcal{O}_K$ coprime to $|H|$, define 
\[ (r,\Sigma_H) = \mathcal{O}_KG\cdot r + \mathcal{O}_KG\cdot \Sigma_H, \mbox{ where }\Sigma_H = \sum_{s\in H} s.\]
The next proposition, which generalizes \cite[Proposition 2.4 (i)]{Ullom Swan}, shows that $(r,\Sigma_H)$ is locally free over $\mathcal{O}_KG$ of rank one and so it defines a class $[(r,\Sigma_H)]$ in $\Cl(\mathcal{O}_KG)$. Define
\[ T_H^*(\mathcal{O}_KG) = \{[(r,\Sigma_H)]: r\in\mathcal{O}_K \mbox{ coprime to $|H|$}\}\]
to be the collection of all such classes. It follows directly from the definition that $T_G^*(\mathcal{O}_KG)$ is equal to $T(\mathcal{O}_KG)$.

\begin{prop}\label{class in T}Let $r\in\mathcal{O}_K$ be coprime to $|H|$. For each $v\in M_K$, define
\[c_{H,r,v} = \begin{cases}1&\mbox{if $v\nmid r$},\\ r + \frac{1-r}{|H|}\Sigma_H&\mbox{if $v\mid r$},\end{cases}\]
and set $c_{H,r} = (c_{H,r,v})$. Then we have $\mathcal{O}_{K}G\cdot c_{H,r} = (r,\Sigma_H)$.
\end{prop}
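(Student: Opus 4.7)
The plan is to establish the equality componentwise in the idelic description of $\mathcal{O}_K G \cdot c_{H,r}$, reducing the claim to checking that at each prime $v \in M_K$ one has
$$\mathcal{O}_{K_v} G \cdot c_{H,r,v} = \mathcal{O}_{K_v} G \cdot r + \mathcal{O}_{K_v} G \cdot \Sigma_H,$$
and then invoking the standard fact that a finitely generated $\mathcal{O}_K$-submodule of $KG$ is the intersection of its $v$-adic completions inside $KG$.

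First I would verify that $c_{H,r} = (c_{H,r,v})$ genuinely lies in $J(KG)$, i.e., that each $c_{H,r,v}$ is a unit in $(K_vG)^\times$. For $v\nmid r$ this is trivial, and for $v\mid r$ the hypothesis that $r$ is coprime to $|H|$ forces $|H|\in \mathcal{O}_{K_v}^\times$, so the idempotent $e_H = |H|^{-1}\Sigma_H$ lies in $\mathcal{O}_{K_v}G$ and one can write $c_{H,r,v} = r(1-e_H) + e_H$; its inverse in $K_v G$ is then visibly $r^{-1}(1-e_H)+e_H$.

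Next, I would prove the local identity above for each $v$. The case $v\nmid r$ is immediate since both sides equal $\mathcal{O}_{K_v}G$ (using that $r$ is a unit). For $v\mid r$, the containment $c_{H,r,v}\in \mathcal{O}_{K_v}G\cdot r + \mathcal{O}_{K_v}G\cdot\Sigma_H$ is clear from the defining formula, using $(1-r)/|H|\in \mathcal{O}_{K_v}$. For the reverse containment, I would exploit the orthogonal idempotent decomposition $1=e_H+(1-e_H)$ and compute:
$$\Sigma_H\cdot c_{H,r,v} = \Sigma_H\cdot e_H = \Sigma_H,$$
using $\Sigma_H(1-e_H)=0$ and $\Sigma_H^2=|H|\Sigma_H$; and
$$(1-e_H)\cdot c_{H,r,v} = r(1-e_H).$$
This puts both $\Sigma_H$ and $r(1-e_H)$ inside $\mathcal{O}_{K_v}G\cdot c_{H,r,v}$, whence also $re_H = (r/|H|)\Sigma_H$, and therefore $r=r(1-e_H)+re_H$ lies in $\mathcal{O}_{K_v}G\cdot c_{H,r,v}$, as needed.

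Finally, I would globalize. The idelic definition gives
$$\mathcal{O}_K G\cdot c_{H,r} = \bigcap_{v\in M_K}\bigl(\mathcal{O}_{K_v}G\cdot c_{H,r,v}\cap KG\bigr) = \bigcap_{v\in M_K}\bigl((\mathcal{O}_{K_v}G\cdot r + \mathcal{O}_{K_v}G\cdot \Sigma_H)\cap KG\bigr).$$
Since $(r,\Sigma_H)$ is a finitely generated $\mathcal{O}_K$-submodule of $KG$ whose $v$-adic completion is exactly $\mathcal{O}_{K_v}G\cdot r + \mathcal{O}_{K_v}G\cdot\Sigma_H$, the standard local-global principle for lattices yields $(r,\Sigma_H) = \bigcap_v((r,\Sigma_H)\otimes_{\mathcal{O}_K}\mathcal{O}_{K_v})\cap KG$, and the proof is complete. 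There is no real obstacle here; the only mild subtlety is remembering that $|H|\in \mathcal{O}_{K_v}^\times$ precisely when $v\mid r$, which is exactly what makes the idempotent $e_H$ available in the only case where one needs it.
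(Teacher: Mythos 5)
Your proof is correct and follows essentially the same route as the paper's: reduce to the local identity $\mathcal{O}_{K_v}G\cdot c_{H,r,v}=\mathcal{O}_{K_v}G\cdot r+\mathcal{O}_{K_v}G\cdot\Sigma_H$ at each $v$, using that $|H|\in\mathcal{O}_{K_v}^\times$ when $v\mid r$. Your idempotent formulation $c_{H,r,v}=r(1-e_H)+e_H$ with $e_H=|H|^{-1}\Sigma_H$ is just a repackaging of the paper's explicit multiplier identities $r=\bigl(1+\tfrac{r-1}{|H|}\Sigma_H\bigr)c_{H,r,v}$ and $\Sigma_H=\Sigma_H\,c_{H,r,v}$.
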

\begin{proof}For each $v\in M_K$, we need to show that 
\[\mathcal{O}_{K_v}G\cdot c_{H,r,v} = \mathcal{O}_{K_v}G\cdot r + \mathcal{O}_{K_vG}\cdot\Sigma_H.\]
For $v\nmid r$, we have $r\in\mathcal{O}_{K_v}^\times$, and this is clear.  For $v\mid r$, we have $|H|\in\mathcal{O}_{K_v}^\times$ because $r$ is coprime to $|H|$, and so
\[ \textstyle\mathcal{O}_{K_v}G\cdot\left(r + \frac{1-r}{|H|}\Sigma_H\right) \subset \mathcal{O}_{K_v}G\cdot r+ \mathcal{O}_{K_v}G\cdot \Sigma_H.\]
The reverse inclusion also holds because 
\[ \textstyle r = \left(1 + \frac{r-1}{|H|}\Sigma_H\right)\left(r +  \frac{1-r}{|H|}\Sigma_H\right)\mbox{ and }\Sigma_H = \Sigma_H\left(r +  \frac{1-r}{|H|}\Sigma_H\right).\]
We then see that the claim holds.
\end{proof}

In what follows, for simplicity, let us assume that 
\begin{equation}\label{ass}\mbox{$H$ is normal in $G$ and the quotient $G/H$ is abelian}.\end{equation}
Put $Q = G/H$, and let $H_1,\dots,H_q$ denote all the distinct cosets of $H$ in $G$. Notice that we have an augmentation homomorphism
\[\upepsilon:\mathcal{O}_KG\longrightarrow \mathcal{O}_KQ;
\hspace{1em}\upepsilon\left(\sum_{s\in G}\alpha_ss\right) = \sum_{i=1}^{q}\left(\sum_{s\in H_i}\alpha_s\right)H_i.\]
Then, we have a fiber product diagram of rings, given by
\begin{equation}\label{fiber product diagram}
\begin{tikzcd}[column sep = 1.75cm, row sep = 1.25cm]
\mathcal{O}_KG \arrow{r}{\upepsilon} \arrow[swap]{d} & \mathcal{O}_KQ \arrow{d}{\uppi}\\
\Gamma_H\arrow[swap]{r}{\overline{\upepsilon}}&\Lambda_{|H|}Q
\end{tikzcd}, \mbox{ where }
\begin{cases}\Gamma_H = \mathcal{O}_KG/(\Sigma_H),\\\Lambda_{|H|} = \mathcal{O}_K/|H|\mathcal{O}_K.\end{cases}\end{equation}
Here the vertical maps are the canonical quotient maps, and $\overline{\upepsilon}$ is the homomorphism induced by $\upepsilon$. We then have the identification 
\begin{equation}\label{fiber prod iden}\mathcal{O}_KG = \{(x,y) \in\mathcal{O}_KQ\times\Gamma_H: \uppi(x) = \overline{\upepsilon}(y)\}.\end{equation}
In particular, writing
\begin{equation}\label{xy notation} x = \sum_{i=1}^{q}x_iH_i,\,\ y = \widetilde{y} + (\Sigma_H),\,\ \upepsilon(\widetilde{y}) = \sum_{i=1}^{q}\widetilde{y}_i H_i, \end{equation}
the corresponding element in $\mathcal{O}_KG$ is given by
\[ \widetilde{y} + \left(\sum_{i=1}^q\left(\frac{x_i-\widetilde{y_i}}{|H|}\right)s_i\right)\Sigma_{H},\mbox{ where $s_i\in H_i$ is fixed}.\]
Since $Q$ is abelian, from the Mayer-Vietoris sequence (see \cite[Section 49B]{CR} or \cite[(1.12), (4.19), (4.21)]{RU}) associated to (\ref{fiber product diagram}), we obtain a homomorphism 
\[\partial_H: (\Lambda_{|H|}Q)^\times\longrightarrow D(\mathcal{O}_KG);\hspace{1em}\partial_H(\eta) = [(\mathcal{O}_KG)(\eta)],\]
where $D(\mathcal{O}_KG)$ denotes the kernel group in $\Cl(\mathcal{O}_KG)$ defined as in \cite{RU}, and
\[ (\mathcal{O}_KG)(\eta) = \{(x,y) \in \mathcal{O}_KQ \times \Gamma_H: \uppi(x) = \overline{\upepsilon}(y)\eta\}\]
is equipped with the obvious $\mathcal{O}_KG$-module structure via (\ref{fiber prod iden}).

\vspace{1mm}

The next proposition, which generalizes \cite[Proposition 2.7]{Ullom Swan}, shows that
\[T_H^*(\mathcal{O}_KG) = \partial_H(\Lambda_{|H|}^\times),\]
where $\Lambda_{|H|}$ is regarded as a subring of $\Lambda_{|H|}Q$ in the obvious way (cf. the set $T_H(\mathcal{O}_KG)$ defined in \cite{Oliver}). This means that under the assumption (\ref{ass}), the set $T_H^*(\mathcal{O}_KG)$ is in fact a subgroup of $\Cl(\mathcal{O}_KG)$.

\begin{prop}\label{T = Image}Let $r\in\mathcal{O}_K$ be coprime to $|H|$. Then we have \[\partial_H((r+|H|\mathcal{O}_K)H) = [(r,\Sigma_H)].\]
\end{prop}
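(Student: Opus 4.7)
The plan is to exhibit an explicit $\mathcal{O}_KG$-module isomorphism from $(r,\Sigma_H)$ onto $(\mathcal{O}_KG)(\eta)$, where $\eta := (r+|H|\mathcal{O}_K)H \in (\Lambda_{|H|}Q)^\times$ (a unit precisely because $r$ is coprime to $|H|$). Since $\partial_H(\eta) = [(\mathcal{O}_KG)(\eta)]$ by definition, this immediately yields the claim.

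Concretely, using the fiber product identification (\ref{fiber prod iden}), I would define
\[ \phi : (r,\Sigma_H) \longrightarrow \mathcal{O}_KQ \times \Gamma_H, \qquad \phi(\alpha) = (\upepsilon(\alpha),\beta), \]
where $\beta \in \Gamma_H$ is the unique element satisfying $r\beta = \alpha + (\Sigma_H)$. Well-definedness requires (i) that $\alpha + (\Sigma_H) \in r\Gamma_H$, which is immediate since $\alpha \in r\mathcal{O}_KG + \Sigma_H\mathcal{O}_KG$, and (ii) that multiplication by $r$ is injective on $\Gamma_H$, which holds because $\Gamma_H$ embeds into the ring $KG/(\Sigma_H)$ in which $r$ is a unit. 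The compatibility $\uppi(\upepsilon(\alpha)) = \overline{\upepsilon}(\beta)\eta$ in $\Lambda_{|H|}Q$ follows at once from $\overline{\upepsilon}(r\beta) = \overline{\upepsilon}(\alpha + (\Sigma_H)) \equiv \upepsilon(\alpha) \pmod{|H|}$, so $\phi$ lands in $(\mathcal{O}_KG)(\eta)$; the $\mathcal{O}_KG$-linearity is then direct given the action $\gamma\cdot(x,y) = (\upepsilon(\gamma)x,\bar\gamma y)$ coming from (\ref{fiber prod iden}).

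For injectivity, $\phi(\alpha) = 0$ forces $\upepsilon(\alpha) = 0$ and $\alpha = \Sigma_H\gamma$ for some $\gamma \in \mathcal{O}_KG$. Applying $\upepsilon$ gives $|H|\upepsilon(\gamma) = 0$, hence $\upepsilon(\gamma) = 0$; a coset-by-coset coefficient comparison shows that the kernel of multiplication by $\Sigma_H$ on $\mathcal{O}_KG$ is exactly $\ker(\upepsilon)$, and therefore $\alpha = \Sigma_H\gamma = 0$. For surjectivity, given $(x,y) \in (\mathcal{O}_KG)(\eta)$, lift $y$ to $\widetilde y \in \mathcal{O}_KG$ with $\upepsilon(\widetilde y) = \sum_i \widetilde y_i H_i$ and write $x = \sum_i x_i H_i$; the compatibility condition translates into $x_i - r\widetilde y_i \in |H|\mathcal{O}_K$ for each $i$, so
\[ \alpha := r\widetilde y + \Sigma_H\sum_{i=1}^q \frac{x_i - r\widetilde y_i}{|H|}\, s_i \]
visibly lies in $(r,\Sigma_H)$ and satisfies $\phi(\alpha) = (x,y)$.

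The only point I anticipate as requiring genuine care is the injectivity step, specifically the elementary but crucial identification $\ker(\alpha \mapsto \Sigma_H\alpha) = \ker(\upepsilon)$ in $\mathcal{O}_KG$; everything else is mechanical bookkeeping through the fiber product (\ref{fiber prod iden}).
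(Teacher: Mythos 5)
Your proof is correct and takes essentially the same route as the paper: the paper defines $\varphi\colon(\mathcal{O}_KG)(\eta)\to\mathcal{O}_KG$, $(x,y)\mapsto(x,y(r+(\Sigma_H)))$, and shows it is injective with image $(r,\Sigma_H)$, and your $\phi$ is precisely the inverse of that map (your formula for the preimage $\alpha$ in the surjectivity step is the paper's equation (\ref{image phi})). The one point you flag as delicate, namely $\ker(\alpha\mapsto\Sigma_H\alpha)=\ker(\upepsilon)$, is the same coefficient computation the paper uses to get $\widetilde{y}\in(\Sigma_H)$, so nothing further is needed.
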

\begin{proof} For brevity, put $\eta = (r+ |H|\mathcal{O}_K)H$. Note that by definition, we have
\[ \eta = \uppi(rH) = \overline{\upepsilon}(r+(\Sigma_H)).\]
Via the identification (\ref{fiber prod iden}), we may define an $\mathcal{O}_KG$-homomorphism
\[ \varphi: (\mathcal{O}_KG)(\eta) \longrightarrow \mathcal{O}_KG;\hspace{1em}
\varphi(x,y) = (x,y(r + (\Sigma_H))).\]
Below, we shall show that $\Image(\varphi) = (r,\Sigma_H)$ and $\ker(\varphi) = 0$. This would imply that $(\mathcal{O}_KG)(\eta)$ and $(r,\Sigma_H)$ are isomorphic as $\mathcal{O}_KG$-modules, from which the claim follows. Given $(x,y)\in(\mathcal{O}_KG)(\eta)$, in the notation of (\ref{xy notation}), we have
\begin{equation}\label{image phi} (x,y(r + (\Sigma_H))) = \widetilde{y}r + \left(\sum_{i=1}^{q}\left(\frac{x_i - \widetilde{y_i}r}{|H|}\right)s_i\right)\Sigma_{H}.\end{equation}
First, from (\ref{image phi}), we immediately see that $\Image(\varphi) \subset (r,\Sigma_H)$, as well as
\[ \varphi((rH,1+(\Sigma_H)) = r
\mbox{ and }\varphi(|H|H,(\Sigma_H)) = \Sigma_H,\]
whence $\Image(\varphi) \supset (r,\Sigma_H)$ holds also. Next, suppose that $(x,y)\in\ker(\varphi)$. It is clear from the definition of $\varphi$ that $x= 0$. Then, we deduce from (\ref{image phi}) that
\[ \widetilde{y}r - \left(\sum_{i=1}^{q}\frac{\widetilde{y}_ir}{|H|}s_i\right)\Sigma_H = 0\mbox{ and hence } \widetilde{y} \in (\Sigma_H).\]
This shows that $y = 0$, and so $\ker(\varphi) = 0$, as desired.
\end{proof}

\subsection{Preliminaries} Let $H$ be a subgroup of $G$ and let $r\in\mathcal{O}_K$ be coprime to $|H|$. Then, via the isomorphism (\ref{idelic description}), we have
\begin{equation}\label{j(cr)} j(c_{H,r}) = [(r,\Sigma_H)],\mbox{ where }(c_{H,r}) = (c_{H,r,v})\in J(KG)\end{equation}
is defined as in Proposition~\ref{class in T}. Also, note that for $v\mid r$, we have
\begin{equation}\label{cr chi} c_{H,r,v}(\chi) =\begin{cases} 1 & \mbox{if $\chi(H) = 1$} \\ r &\mbox{if $\chi(H)\neq1$}\end{cases}\end{equation}
for $\chi\in\widehat{G}$ via the identification (\ref{iden}). This immediately implies that:

\begin{prop}\label{T invar}We have $T_H^*(\mathcal{O}_KG)\subset \Cl(\mathcal{O}_KG)^{\Psi_\bZ}$.
\end{prop}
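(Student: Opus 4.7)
My plan is to verify directly at the level of idelic representatives that the class $[(r,\Sigma_H)]$ is fixed by every Adams operation. Recall from the remark following (\ref{iden}) that under the identifications of (\ref{idelic description}) and (\ref{iden}), each $\Psi_k$ (for $k\in\bZ$ coprime to $|G|$) is induced componentwise by the rule $\chi\mapsto\chi^k$ on $\widehat{G}$. So it suffices to exhibit an idele representing $[(r,\Sigma_H)]$ whose components, viewed as maps $\widehat{G}\to (K_v^c)^\times$, are invariant under precomposition with $\chi\mapsto\chi^k$.

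Fix $r\in\mathcal{O}_K$ coprime to $|H|$. By (\ref{j(cr)}), the idele $c_{H,r}=(c_{H,r,v})$ from Proposition~\ref{class in T} satisfies $j(c_{H,r})=[(r,\Sigma_H)]$. For $v\nmid r$ we have $c_{H,r,v}=1$, which is trivially fixed under $\chi\mapsto\chi^k$. For $v\mid r$, formula (\ref{cr chi}) asserts
\[
c_{H,r,v}(\chi)=\begin{cases}1&\text{if }\chi(H)=1,\\ r&\text{if }\chi(H)\neq 1,\end{cases}
\]
so the value depends solely on whether the restriction $\chi|_H$ is trivial.

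The key observation is now that for $k$ coprime to $|G|$, in particular coprime to $|H|$, and for any $s\in H$, the value $\chi(s)$ is an $|H|$th root of unity, hence $\chi(s)^k=1$ if and only if $\chi(s)=1$. Consequently $\chi^k|_H$ is trivial if and only if $\chi|_H$ is trivial, which gives $c_{H,r,v}(\chi^k)=c_{H,r,v}(\chi)$ for every $\chi\in\widehat{G}$ and every $v\in M_K$.

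Therefore $\Psi_k$ fixes the idele $c_{H,r}$ on the nose, and passing to $\Cl(\mathcal{O}_KG)$ via $j$ yields $\Psi_k([(r,\Sigma_H)])=[(r,\Sigma_H)]$ for all $k$ coprime to $|G|$, proving $T_H^*(\mathcal{O}_KG)\subset \Cl(\mathcal{O}_KG)^{\Psi_\bZ}$. There is really no obstacle here: once one has the explicit idele of Proposition~\ref{class in T} and the formula (\ref{cr chi}), the argument reduces to the elementary remark that raising to a power coprime to $|H|$ preserves triviality of characters of $H$.
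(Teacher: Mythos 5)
Your argument is correct and is essentially identical to the paper's own proof, which likewise combines the explicit idele formula (\ref{cr chi}) with the observation that $\chi^k(H)=1$ if and only if $\chi(H)=1$ for $k$ coprime to $|H|$. You have simply written out in more detail what the paper states in one line.
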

\begin{proof}This follows from (\ref{cr chi}) and the fact that
\[ \chi^k(H) = 1\mbox{ if and only if }\chi(H) = 1\]
for any $k\in\bZ$ coprime to $|H|$.
\end{proof}

To make connections between $T_H^*(\mathcal{O}_KG)$ and $R(\mathcal{O}_KG)$, we shall use Lemmas~\ref{char lem1} and~\ref{char lem2}. We shall also need the following definitions.

\vspace{1mm}

Fix a prime $v\in M_K$. Recall from (\ref{iden}) and (\ref{lambda def}) that
\[ (K_vG)^\times = \mbox{Map}_{\Omega_{K_v}}(\widehat{G},(K_v^c)^\times)\mbox{ and }\Lambda(K_vG)^\times = \mbox{Map}_{\Omega_{K_v}}(G(-1), (K_v^c)^\times).\]
Given $t\in G$ with $t\neq1$ and $x\in K_v^\times$, define
\[c_{t,v,x,1}(\chi) = x^{\langle\chi ,t\rangle + \langle\chi,t^{-1}\rangle}\mbox{ and }c_{t,v,x,2}(\chi)= x^{2\langle\chi,t\rangle-\langle\chi,t^2\rangle}\]
for $\chi\in\widehat{G}$, where both exponents are integers by Definition~\ref{pair}. In the case that $|t| = 2$ and $|t|>2$, respectively, define
\[g_{t,v,x,1}(s) = \begin{cases} x^2 & \mbox{if $s = t$}\\
1&\mbox{otherwise}\end{cases}\mbox{ and }g_{t,v,x,1}(s) = \begin{cases}x &\mbox{for $s\in\{t,t^{-1}\}$}\\1&\mbox{otherwise}\end{cases}\]
for $s\in G(-1)$. In the case that $|t|$ is odd, further define
\[g_{t,v,x,2}(s) = \begin{cases} x^2 & \mbox{if $s = t$}\\ x^{-1}&\mbox{for $s = t^{2}$}\\ 1&\mbox{otherwise}\end{cases}\]
for $s\in G(-1)$. We have the following lemmas.

\begin{lem}\label{char lem3'} We have $c_{t,v,x,1}\in(K_vG)^\times$.
\end{lem}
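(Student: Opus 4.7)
The plan is to verify the two conditions that define $(K_vG)^\times$ under the identification (\ref{iden}): namely, that $c_{t,v,x,1}$ sends $\widehat{G}$ into $(K_v^c)^\times$ and is $\Omega_{K_v}$-equivariant.

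First I would show that the exponent $\langle\chi,t\rangle+\langle\chi,t^{-1}\rangle$ equals either $0$ or $1$ for every $\chi\in\widehat{G}$. By Definition~\ref{pair}, $|t|\langle\chi,t\rangle$ is an integer in $\{0,1,\dots,|t|-1\}$, and the relation $\chi(t^{-1})=\chi(t)^{-1}$ forces $|t|\langle\chi,t^{-1}\rangle \equiv -|t|\langle\chi,t\rangle\pmod{|t|}$. Thus if $\chi(t)=1$ both terms are $0$, and otherwise $|t|\langle\chi,t^{-1}\rangle=|t|-|t|\langle\chi,t\rangle$, so the sum equals $1$. In particular $c_{t,v,x,1}(\chi)\in\{1,x\}\subset K_v^\times$, which takes care of the first condition and also confirms the parenthetical remark that the exponent is an integer.

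Next, for the $\Omega_{K_v}$-equivariance, I would use that $(\omega\chi)(s)=\omega(\chi(s))$ for $\omega\in\Omega_{K_v}$ and $s\in G$. Since $\omega$ is a field automorphism, $(\omega\chi)(t)=1$ if and only if $\chi(t)=1$, so the dichotomy above gives $\langle\omega\chi,t\rangle+\langle\omega\chi,t^{-1}\rangle = \langle\chi,t\rangle+\langle\chi,t^{-1}\rangle$. Since $x\in K_v^\times$ is fixed by $\Omega_{K_v}$, this yields
\[\omega(c_{t,v,x,1}(\chi)) = c_{t,v,x,1}(\chi) = c_{t,v,x,1}(\omega\chi),\]
as required.

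There is no serious obstacle here; the whole statement reduces to the combinatorial observation that the pair $(\langle\chi,t\rangle,\langle\chi,t^{-1}\rangle)$ always sums to a $\{0,1\}$-valued function that only records whether $\chi$ is trivial on $\langle t\rangle$, a condition manifestly preserved under the $\Omega_{K_v}$-action on $\widehat{G}$.
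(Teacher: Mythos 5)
Your proof is correct and follows the same route as the paper: the key point in both is the observation (the paper's equation (\ref{pair sum})) that $\langle\chi,t\rangle+\langle\chi,t^{-1}\rangle$ equals $0$ or $1$ according to whether $\chi(t)=1$, which makes the exponent an $\Omega_{K_v}$-invariant integer. You simply spell out the derivation from Definition~\ref{pair} and the equivariance check in more detail than the paper's one-line argument.
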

\begin{proof}The map $c_{t,v,x,1}$ preserves the $\Omega_{K_v}$-action because 
\begin{equation}\label{pair sum} \langle\chi,s\rangle + \langle\chi,s^{-1}\rangle = \begin{cases}0&\mbox{if $\chi(s) = 1$}\\
1&\mbox{if $\chi(s)\neq1$}\end{cases}\end{equation}
for all $\chi\in\widehat{G}$ and $s\in G$ by Definition~\ref{pair}.
\end{proof}

\begin{lem}\label{char lem3}Suppose that $\zeta_{|t|}\in K_v^\times$. Then we have 
\[c_{t,v,x,2}\in (K_vG)^\times\mbox{ and }g_{t,v,x,1},g_{t,v,x,2}\in \Lambda(K_vG)^\times.\]
Moreover, for both $i=1,2$, we have 
\[rag(c_{t,v,x,i}) = \Theta^t(g_{t,v,x,i}).\]
\end{lem}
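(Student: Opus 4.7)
The plan is to establish the claims in order, with the hypothesis $\zeta_{|t|}\in K_v^\times$ entering only through the question of $\Omega_{K_v}$-equivariance.

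First, for the membership $c_{t,v,x,2}\in(K_vG)^\times$, I would verify that the exponent $2\langle\chi,t\rangle-\langle\chi,t^2\rangle$ is a well-defined integer and that the resulting map is $\Omega_{K_v}$-equivariant. Integrality is a short computation: since $\chi(t)^2=\chi(t^2)$, unpacking Definition~\ref{pair} shows that the exponent always lies in $\{0,1\}$. For equivariance, both $\chi(t)$ and $\chi(t^2)$ are $|t|$th roots of unity, hence fixed by $\Omega_{K_v}$ under the hypothesis $\zeta_{|t|}\in K_v^\times$; this forces $\langle\omega\chi,t\rangle=\langle\chi,t\rangle$ and $\langle\omega\chi,t^2\rangle=\langle\chi,t^2\rangle$ for every $\omega\in\Omega_{K_v}$, and since $x$ is also $\Omega_{K_v}$-fixed, $c_{t,v,x,2}$ is equivariant as required.

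Next, for the maps $g_{t,v,x,i}\in\Lambda(K_vG)^\times$, the key observation is that $\zeta_{|t|}\in K_v^\times$ forces the integer $\kappa(\omega^{-1})$ governing the $\Omega_{K_v}$-action on $G(-1)$ to satisfy $\kappa(\omega^{-1})\equiv 1\pmod{|t|}$ for every $\omega\in\Omega_{K_v}$. Since $t$, $t^{-1}$, and $t^2$ all have order dividing $|t|$ (and $t^2\neq 1$ in the $i=2$ case, where $|t|$ is odd), they are fixed by $\Omega_{K_v}$; moreover, the preimages of $\{t,t^{-1}\}$ and of $\{t^2\}$ under $s\mapsto s^{\kappa(\omega^{-1})}$ are just these sets themselves, since $\kappa(\omega^{-1})^{-1}\equiv 1\pmod{|t|}$ as well. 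Equivariance of both $g_{t,v,x,1}$ and $g_{t,v,x,2}$ then follows from the fact that $x\in K_v^\times$ is $\Omega_{K_v}$-fixed.

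Finally, for the identity $rag(c_{t,v,x,i})=\Theta^t(g_{t,v,x,i})$, both sides are group homomorphisms $A_{\widehat{G}}\to(K_v^c)^\times$, so I would evaluate them at an arbitrary $\psi\in A_{\widehat{G}}$. By $\bZ$-linearity of the pairing, the left-hand side is $x^{\langle\psi,t\rangle+\langle\psi,t^{-1}\rangle}$ for $i=1$ and $x^{2\langle\psi,t\rangle-\langle\psi,t^2\rangle}$ for $i=2$, while the right-hand side equals $\prod_{s\in G}g_{t,v,x,i}(s)^{\langle\psi,s\rangle}$; substituting the piecewise definitions of the $g$'s reproduces exactly the same expressions. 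The one subtlety is that when $|t|=2$ one has $t=t^{-1}$, so the single value $g_{t,v,x,1}(t)=x^2$ must be seen to account for the full sum $\langle\psi,t\rangle+\langle\psi,t^{-1}\rangle=2\langle\psi,t\rangle$. I expect this minor case-analysis between $|t|=2$ and $|t|>2$ to be the only real obstacle, and it is entirely mechanical.
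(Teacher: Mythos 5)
Your proposal is correct and follows essentially the same route as the paper's (much terser) proof: equivariance of $c_{t,v,x,2}$ and of the $g_{t,v,x,i}$ is reduced to the fact that $\zeta_{|t|}\in K_v^\times$ fixes the relevant character values and the elements of $\langle t\rangle$ in $G(-1)$, and the identity $rag(c_{t,v,x,i})=\Theta^t(g_{t,v,x,i})$ is checked by evaluating both sides on $\psi\in A_{\widehat{G}}$ via the formula $\Theta^t(g)(\psi)=\prod_{s\in G}g(s)^{\langle\psi,s\rangle}$ and $\bZ$-linearity of the pairing. Your explicit treatment of the $|t|=2$ case (where $t=t^{-1}$ and $g_{t,v,x,1}(t)=x^2$ absorbs both terms of the exponent) correctly supplies the detail the paper leaves as ``a simple verification.''
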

\begin{proof}Since $\zeta_{|t|}\in K_v^\times$, we easily see that $c_{t,v,x,2}$, $g_{t,v,x,1}$, and $g_{t,v,x,2}$ indeed all preserve the $\Omega_{K_v}$-action. Since
\begin{equation}\label{Theta g} \Theta^t(g)(\psi) = \prod_{s\in G}g(s)^{\langle\psi,s\rangle}\mbox{ for $g\in\Lambda(K_vG)^\times$ and $\psi\in A_{\widehat{G}}$}\end{equation}
by definition, the second also holds by a simple verification.
\end{proof}

\subsection{Proof of Theorem~\ref{thm Swan1}} Let $H$ be a cyclic subgroup of $G$ of order $n$ and let $r\in\mathcal{O}_K$ be coprime to $n$. Recall (\ref{j(cr)}) and that $j(c_{H,r})\in \Cl(\mathcal{O}_KG)^{\Psi_\bZ}$ by Proposition~\ref{T invar}. We need to show that $j(c_{H,r})^{d_n(K)}\in R(\mathcal{O}_KG)$ in part (a), and that $j(c_{H,r})\in\Image(\Xi_{-1})$ in part (b). We shall do so using Lemma~\ref{char lem1}. 

\vspace{1mm}

In what follows, let $t$ be a fixed generator of $H$.

\begin{proof}[Proof of Theorem~\ref{thm Swan1} (a)]
Let $D$ be the subgroup of $(\bZ/n\bZ)^\times$ such that
\[ D\simeq\Gal(K(\zeta_n)/K)\mbox{ via }i \mapsto (\zeta_n\mapsto \zeta_n^i).\]
Define $g = (g_v)\in J(\Lambda(KG))$ by setting $g_v = 1$ for $v\nmid r$, and 
\[ g_v(s) = \begin{cases} r &\mbox{if $s \in\{ t^i, t^{-i}\}$ for some $i\in D$}\\ 1&\mbox{otherwise}\end{cases}\]
for $v\mid r$. It is easy to see that $g_v$ preserves the $\Omega_{K_v}$-action.

\vspace{1mm}

Observe that for all $v\in M_K$, we have
\begin{equation}\label{rag theta}rag((c_{H,r,v})^{d_n(K)}) =\Theta^t(g_v).\end{equation}
Indeed, for $v\nmid r$, this is clear. As for $v\mid r$, we have from (\ref{Theta g}) that
\[\Theta^t(g_v)(\chi) = \begin{cases}
\left(r\right)^{\frac{1}{2}\sum\limits_{i\in D}\left(\langle\chi,t^i\rangle + \langle\chi,t^{-i}\rangle\right)} & \mbox{if $-1\in D$}\\
\left(r\right)^{\sum\limits_{i\in D}\left(\langle\chi,t^i\rangle + \langle\chi,t^{-i}\rangle\right)} & \mbox{if $-1\notin D$}
\end{cases}\]
and from (\ref{pair sum}) that
\[\sum_{i\in D}\left(\langle\chi,t^i\rangle + \langle\chi,t^{-i}\rangle\right) = \begin{cases} 0 &\mbox{if $\chi(t) = 1$} \\ |D| &\mbox{if $\chi(t)\neq1$}\end{cases}\] 
for any $\chi\in\widehat{G}$. The equality (\ref{rag theta}) then follows from (\ref{cr chi}). Hence, we have $j(c_{H,r})^{d_n(K)}\in R(\mathcal{O}_KG)$ by Lemma~\ref{char lem1}, as desired.
\end{proof}

\begin{proof}[Proof of Theorem~\ref{thm Swan1} (b)] Suppose that $n$ is odd and that $\zeta_n\in K^\times$. Then, by Lemma~\ref{char lem3}, we may define $c = (c_v) \in J(KG)$ by setting $c_v = 1$ for $v\nmid r$, and $c_v = c_{t,v,r,2}$ for $v\mid r$. Also, we have $j(c)\in R(\mathcal{O}_KG)$ by Lemma~\ref{char lem1}.

\vspace{1mm}

Below, we shall show that $j(c_{H,r}) = \Xi_{-1}(j(c))$, whence $j(c_{H,r})\in\Image(\Xi_{-1})$. To that end, let $v\in M_K$ and $\chi\in\widehat{G}$. It suffices to show that
\begin{equation}\label{ccc} c_{H,r,v}(\chi) = c_v(\chi)c_v(\chi^{-1}).\end{equation}
For $v\nmid r$, this is clear. For $v\mid r$, observe that 
\[\chi(t) = 1\mbox{ if and only if }\chi(t^2) = 1\]
 because $|t|$ is odd. It then follows from (\ref{pair sum}) that
\begin{align*}c_v(\chi)c_v(\chi^{-1}) &= r^{2(\langle\chi,t\rangle + \langle\chi^{-1},t\rangle) - (\langle\chi,t^2\rangle + \langle\chi^{-1},t^2\rangle)} = \begin{cases} 1 &\mbox{for $\chi(t) = 1$},\\r&\mbox{for $\chi(t) \neq 1$}.\end{cases}\end{align*}
From (\ref{cr chi}), we then see that (\ref{ccc}) indeed holds.
\end{proof}

\subsection{Proof of Theorem~\ref{thm Swan2}} In what follows, for each $v\in M_K$, let $\pi_v$ be a fixed uniformizer of $K_v$. 

\begin{proof}[Proof of Theorem~\ref{thm Swan2} (a)] Suppose that $\Cl(\mathcal{O}_K)=1$. For each $v_0\in M_K$, we may then choose $\pi_{v_0}$ to be an element of $\mathcal{O}_K$. Then, for any cyclic subgroup $H$ of $G$ of order coprime to $v_0$, it makes sense to write 
\[[(\pi_{v_0},\Sigma_H)] = j(c_{H,\pi_{v_0}}),\mbox{ where }c_{H,\pi_{v_0}} = (c_{H,\pi_{v_0},v}) \in J(KG)\]
is as in Proposition~\ref{class in T}. Plainly $c_{H,\pi_{v_0},v} = 1$ for all $v\neq v_0$.

\vspace{1mm}
 
Now, let $L/K$ be any tame and Galois extension with $\Gal(L/K)\simeq G$, and we shall use the notation as in Lemma~\ref{char lem2}. Let $V$ denote the subset of $M_K$ consisting of the primes which ramify in $L/K$. Then, we have
\[s_v=1\mbox{ for $v\notin V$, and so }\Xi_{-1}([\mathcal{O}_L]) = j(c_L) = \prod_{v_0\in V} j(c_{L,v_0}),\]
where we regard $c_{L,v_0}$ as an element of $J(KG)$ whose components outside of $v_0$ are all $1$. For each $v_0\in V$, take $H_{v_0} = \langle s_{v_0}\rangle$, whose order is coprime to $v_0$ because $L/K$ is tame. By (\ref{cr chi}) and (\ref{pair sum}), we have
\[ c_{L,v_0}(\chi) = \pi_{v_0}^{\langle\chi,s_{v_0}\rangle +\langle\chi,s_{v_0}^{-1}\rangle} = c_{H_{v_0},\pi_{v_0},v_0}(\chi)\mbox{ for all }\chi\in\widehat{G}.\]
By definition, we also have $c_{L,v_0,v} = c_{H_{v_0},\pi_{v_0},v} = 1$ for $v\neq v_0$. It follows that $j(c_{L,v_0}) = j(c_{H_{v_0},\pi_{v_0}})$, which is an element of $T_{H_{v_0}}^*(\mathcal{O}_KG)$. This implies that
\[ \Xi_{-1}([\mathcal{O}_L]) \in \prod_{v_0\in V}T_{H_{v_0}}^*(\mathcal{O}_KG) \subset T_{\cyc}^*(\mathcal{O}_KG),\vspace{-1mm}\]
as claimed.
\end{proof}

\begin{proof}[Proof of Theorem~\ref{thm Swan2} (b)]Suppose that $G\neq1$. Then, fix an element $t\in G$ with $t\neq1$, whose order shall be assumed to be odd when $\delta(G) = 1$, and fix a character $\chi\in\widehat{G}$ such that $\chi(t)\neq1$. Now, suppose that $\zeta_{\exp(G)}\in K^\times$. Then, via (\ref{idelic description}) and (\ref{iden}), evaluation at $\chi$ induces a surjective homomorphism
\[\xi_\chi : \Cl(\mathcal{O}_KG) \longrightarrow \Cl(\mathcal{O}_K).\]
Below, we shall show that 
\begin{equation}\label{bound} \xi_{\chi}(\Image(\Xi_{-1})) \supset \Cl(\mathcal{O}_K)^{\delta(G)},\end{equation}
from which the claim would follow.

\vspace{1mm}

Now, every class in $\Cl(\mathcal{O}_K)$ may be represented by a prime ideal $\mathfrak{p}_0$ in $\mathcal{O}_K$, corresponding to $v_0\in M_K$, say. Since $\zeta_{\exp(G)}\in K^\times$, by Lemmas~\ref{char lem3'} and~\ref{char lem3}, we may define $c = (c_v)\in J(KG)$ by setting 
\[c_{v_0} =\begin{cases} c_{t,v,\pi_{v_0},1} & \mbox{if $\delta(G) = 2$} \\ c_{t,v,\pi_{v_0},2} & \mbox{if $\delta(G) = 1$}\end{cases}\]
and $c_v = 1$ for $v\neq v_0$. Note that $j(c)\in R(\mathcal{O}_KG)$ by Lemmas~\ref{char lem1} and~\ref{char lem3}, whence $\Xi_{-1}(j(c)) \in \Image(\Xi_{-1})$. Also, we have
\begin{align*} c_{v_0}(\chi)c_{v_0}(\chi^{-1}) &=
\begin{cases}
 \pi_{v_0}^{2(\langle\chi,t\rangle+\langle\chi^{-1},t\rangle)} & \mbox{if $\delta(G) =2$}\\
\pi_{v_0}^{2(\langle\chi,t\rangle + \langle\chi^{-1},t\rangle) - (\langle\chi,t^2\rangle + \langle\chi^{-1},t^2\rangle)} & \mbox{if $\delta(G) = 1$}
\end{cases}\\& = \pi_{v_0}^{\delta(G)} \end{align*}
by (\ref{pair sum}). We then deduce that 
\[\xi_\chi(\Xi_{-1}(j(c))) = [\mathfrak{p}_0]^{\delta(G)}\mbox{ in }\Cl(\mathcal{O}_K).\]
This proves the desired inclusion (\ref{bound}). 
\end{proof}

\section{Acknowledgments}

The research was supported by the China Postdoctoral Science  Foundation Special Financial Grant (grant no.: 2017T100060). The author would like to thank the anoynmous referee for pointing out some unclear arguments in the original manuscript and for suggesting the reference \cite{Lang} cited in Lemma~\ref{group1}.

\end{document}